\theoremstyle{plain}
\newtheorem{thm}{Theorem}[section]
\theoremstyle{plain}
\newtheorem{lem}[thm]{Lemma}
\newtheorem{prop}[thm]{Proposition}
\newtheorem{cor}[thm]{Corollary}
\theoremstyle{definition}
\newtheorem{rem}{Remark}[section]
\DeclareMathOperator{\Div}{div}
\newcommand{\hn}{\mathbb{H}^{N}}
\newcommand{\authorfootnotes}{\renewcommand\thefootnote{\@fnsymbol\c@footnote}}%
\numberwithin{equation}{section} \allowdisplaybreaks
\begin{document}
        \title[]{Improved L$^p$-Poincar\'e inequalities\\ on the hyperbolic space}

\date{}

\author[Elvise BERCHIO]{Elvise BERCHIO}
\address{\hbox{\parbox{5.7in}{\medskip\noindent{Dipartimento di Scienze Matematiche, \\
Politecnico di Torino,\\
        Corso Duca degli Abruzzi 24, 10129 Torino, Italy. \\[3pt]
        \em{E-mail address: }{\tt elvise.berchio@polito.it}}}}}
        \author[Lorenzo D'Ambrosio]{Lorenzo D'AMBROSIO}
\address{\hbox{\parbox{5.7in}{\medskip\noindent{Dipartimento di Matematica, \\
Universita' degli Studi di Bari ,\\
        via E. Orabona 4, I-70125 Bari, Italy. \\[3pt]
        \em{E-mail address: }{\tt lorenzo.dambrosio@uniba.it}}}}}
\author[Debdip GANGULY]{Debdip GANGULY}
\address{\hbox{\parbox{5.7in}{\medskip\noindent{Department of Mathematics,\\
 Technion, Israel Institute of Technology,\\
        Haifa 32000, Israel. \\[3pt]
        \em{E-mail address: }{\tt gdebdip@technion.ac.il}}}}}
\author[Gabriele GRILLO]{Gabriele GRILLO}
\address{\hbox{\parbox{5.7in}{\medskip\noindent{Dipartimento di Matematica,\\
Politecnico di Milano,\\
   Piazza Leonardo da Vinci 32, 20133 Milano, Italy. \\[3pt]
        \em{E-mail addresses: }{\tt
          gabriele.grillo@polimi.it}}}}}


\keywords{$p$-Poincar\'{e} inequality, hyperbolic space, Poincar\'e-Hardy inequality }


\begin{abstract}
We investigate the possibility of improving the $p$-Poincar\'e inequality $\|\nabla_{\hn} u\|^p_p \\ \ge \Lambda_p \|u\|^p_p$ on the hyperbolic space, where $p>1$ and $\Lambda_p:=[(N-1)/p]^{p}$ is the best constant for which such inequality holds. We prove several different, and independent, improved inequalities, one of which is a Poincar\'e-Hardy inequality, namely an improvement of the best $p$-Poincar\'e inequality in terms of the Hardy weight $r^{-p}$, $r$ being geodesic distance from a given pole. Certain Hardy-Maz'ya-type inequalities in the Euclidean half-space are also obtained.
  \end{abstract}

\maketitle

 \section{Introduction}

Let $\hn$ denote the hyperbolic space of dimension $N\ge2$, $\nabla_{\hn},\Delta_{\hn}$ and ${\rm d}v_{\hn}$ its Riemannian gradient, Laplacian and measure, respectively. It is well known that the L$^2$ spectrum of $-\Delta_{\hn}$ is bounded away from zero. More precisely one has $\sigma(-\Delta_{\hn})=[(N-1)^2/4,+\infty)$. As a byproduct, the quadratic form inequality
\[
\int_{{\mathbb H}^N}|\nabla_{\hn} u|^2\,{\rm d}v_{\hn}\ge \frac{(N-1)^2}4\int_{{\mathbb H}^N}u^2\,{\rm d}v_{\hn}
\]
holds for all $u\in C_c^\infty(\hn)$. See e.g. \cite{DA} for an elementary proof. Besides, another inequality which one is very familiar within the Euclidean setting, namely \it Hardy's inequality\rm, holds true as well on $\hn$, so that one has, at least for $N\ge3$,
\[
\int_{{\mathbb H}^N}|\nabla_{\hn} u|^2\,{\rm d}v_{\hn}\ge \frac{(N-2)^2}4\int_{{\mathbb H}^N}\frac{u^2}{r^2}\,{\rm d}v_{\hn},
\]
where $r:=\varrho(x,x_0)$ denotes geodesic distance from a fixed pole $x_0$. In fact, such inequality
holds on any Cartan-Hadamard manifold, where the latter are defined as those manifolds
which are complete, simply connected and have nonpositive sectional curvatures. See \cite{Carron} for details. Hardy-type inequalities have been the object of a large amount of research in the past decades, see for example, with no claim of completeness, \cite{BFT2,BFT,Mitidieri1,Mitidieri2,BrezisM,Brezis,Dambrosio,pinch,pinch2,FT,GM,Kombe1,Kombe2,MMP,Mitidieri,TZ,Yang}.

\medskip

A combination of these inequalities was given in \cite{AK} and then rediscovered by other methods in \cite{BGG}. A simplified version of it reads
\begin{equation}\label{L2}
\int_{{\mathbb H}^N}|\nabla_{\hn} u|^2\,{\rm d}v_{\hn}- \frac{(N-1)^2}4\int_{{\mathbb H}^N}u^2\,{\rm d}v_{\hn}\ge \frac{1}4\int_{{\mathbb H}^N}\frac{u^2}{r^2}\,{\rm d}v_{\hn}
\end{equation}
for all $u\in C_c^\infty(\hn)$, and  the constants in \eqref{L2} are sharp (the sharpness of the constant $(N-1)^2/4$ in the l.h.s. being obvious), see \cite{BGG}. The sharpness of related inequalities in more general manifolds and similar  improved inequalities of Rellich type, which are again sharp in suitable senses, are also proved in \cite{BGG}. See also \cite{BG} for related higher order Poincar\'e-Hardy inequalities.

\medskip
No L$^p$ analogue of \eqref{L2} is known for $p\not=2$. It is our purpose here to initiate a study of \it improved $p$-Poincar\'e inequalities \rm on $\hn$, where we take the attitude of looking for improvements of the L$^p$-gap inequality
\begin{equation}\label{p-gap}
\int_{{\mathbb H}^N}|\nabla_{\hn} u|^p\,{\rm d}v_{\hn}\ge \left( \frac{N-1}{p} \right)^p\int_{{\mathbb H}^N}|u|^p\,{\rm d}v_{\hn},
\end{equation}
valid for all $u\in C_c^\infty(\hn)$, where it is known that the constant $\left( \frac{N-1}{p} \right)^p$ is the best one for such an inequality to hold, see \cite{Ngo} (a simpler proof of this fact will anyway be given below in Lemma \ref{bottom}).

In fact, let  $-\Delta_{p,\hn}$ denote the $p$-Laplacian operator on $\hn$, namely
\begin{equation}\label{expression_laplacian}
\Delta_{p, \hn}u  :=  \mbox{div}_{\hn} (|\nabla_{\hn} u|^{p-2} \nabla_{\hn} u)
\end{equation}

It is well-known that $\hn$ is a p-hyperbolic manifold, i.e., $- \Delta_{p, \hn}$ admits a positive Green's function by which the validity of a Hardy-type inequality follows. Less evident is the answer to the following question:

\vspace{.3truecm}\noindent{\it Problem.}  Does there exist a nonnegative, not identically zero weight $W$ such that the following improved Poincar\'e inequality
\begin{equation}\label{question2}
\int_{{\mathbb H}^N}|\nabla_{\hn} u|^p\,{\rm d}v_{\hn}- \left(\frac{N-1}{p} \right)^{p} \int_{{\mathbb H}^N} |u|^p\,{\rm d}v_{\hn}\ge \int_{{\mathbb H}^N}W\,|u|^p\,{\rm d}v_{\hn}
\end{equation}
holds for all $u\in C_c^\infty(\hn)$?
\vspace{.3truecm}
\rm
\par

A first affirmative answer to the above question was given in \cite{BMR}, see formula (5.25) there. In fact, the authors prove the following result:

 \begin{prop}[\cite{BMR}]\label{weight1}
Let $p >1$ and $N \geq  2$.  Set $r:=\varrho(x,x_0)$ with $x_0\in{\mathbb H}^N$ fixed. There exists a radial weight $0 <W=W(r)$ such that for all $u \in C_{c}^{\infty}(\hn)$ there holds

\begin{equation*}\label{pphardy}
\int_{\hn} |\nabla_{\hn} u |^{p} \ {\rm d}v_{\hn}  - \left( \frac{N-1}{p} \right)^p \int_{\hn} |u|^{p} \ {\rm d}v_{\hn}  \geq  \int_{\hn} W |u|^{p} \ {\rm d}v_{\hn}\,.
\end{equation*}
Furthermore,
\begin{itemize}

\item near $x_0$ there holds
\begin{equation}\label{newton}
W(r) \;\substack{\sim\\ r \rightarrow 0}\;
  \left\{
    \begin{array}{ll}
      \; \left( \dfrac{N-p}{p} \right)^p\, \dfrac{1}{r^p}& \quad \text{if } N > p\,,\\[2mm]
      \left(\dfrac{N-1}{N}\right)^N\frac1{r^N\left(\log \frac1r\right)^N} &\quad \text{if } N =p\,,\\[2mm]
     C\,\dfrac{1}{r^{\frac{p(N-1)}{p-1}}}&  \quad \text{if }N < p  \,,
    \end{array}
  \right.
\end{equation}
where $C=C(p,N) : = \left(\frac{p-1}{p} \right)^{p} \left( \int_{0}^{\infty} (\sinh s)^{-\frac{N-1}{p-1}} \ \emph{d}s \right)^{-p}$ for $N<p$.
\item Near infinity, there holds
$$ W(r) = \Lambda_p \frac{(N-1)p}{2(N-1+2(p-1))}\sinh(r)^{-2} + o(e^{-3r}) \quad \text{ as } r \rightarrow \infty.$$

\end{itemize}

\end{prop}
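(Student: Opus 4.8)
The plan is to deduce the inequality from a single, explicitly constructed positive supersolution of the $p$-Laplace equation, in the spirit of the ground-state (Allegretto--Piepenbrink / Devyver--Fraas--Pinchover) method. Since $\hn$ is $p$-hyperbolic, $-\Delta_{p,\hn}$ possesses a radial positive Green's function with pole at $x_0$, and writing the radial $p$-Laplacian in geodesic polar coordinates
\[
\Delta_{p,\hn}\psi=(\sinh r)^{1-N}\frac{d}{dr}\Big((\sinh r)^{N-1}|\psi'|^{p-2}\psi'\Big),
\]
one checks, with $a:=\tfrac{N-1}{p-1}$, that
\[
\mathcal G(r):=\int_r^{\infty}(\sinh s)^{-a}\,ds
\]
is $p$-harmonic on $\hn\setminus\{x_0\}$: indeed $(\sinh r)^{N-1}|\mathcal G'|^{p-2}\mathcal G'\equiv-1$ since $a(p-1)=N-1$. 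The integral converges at infinity, and its behaviour at $r=0$ splits according to whether $a>1$, $a=1$ or $a<1$, i.e. $N>p$, $N=p$ or $N<p$, which is exactly the trichotomy in the statement.

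First I would perform the optimal-weight substitution $\phi:=\mathcal G^{(p-1)/p}$. Using $\Delta_{p,\hn}\mathcal G=0$, a direct computation gives
\[
\frac{-\Delta_{p,\hn}\phi}{\phi^{p-1}}=\Big(\tfrac{p-1}{p}\Big)^{p}\frac{|\nabla_{\hn}\mathcal G|^{p}}{\mathcal G^{p}}=:W_{\mathrm{opt}}(r),
\]
the exponent $(p-1)/p$ being precisely the one maximizing the resulting coefficient $\gamma^{p-1}(1-\gamma)(p-1)$ over $\gamma\in(0,1)$. Since $\phi>0$, the Picone inequality for the $p$-Laplacian, $|\nabla_{\hn}u|^{p}\ge|\nabla_{\hn}\phi|^{p-2}\nabla_{\hn}\phi\cdot\nabla_{\hn}(|u|^{p}\phi^{1-p})$, integrated over $\hn$ together with one integration by parts yields
\[
\int_{\hn}|\nabla_{\hn}u|^{p}\,{\rm d}v_{\hn}\ge\int_{\hn}W_{\mathrm{opt}}\,|u|^{p}\,{\rm d}v_{\hn},\qquad u\in C_c^\infty(\hn).
\]
Because $|\nabla_{\hn}\mathcal G|=|\mathcal G'|=(\sinh r)^{-a}$ is radial, $W_{\mathrm{opt}}=\big(\tfrac{p-1}{p}\big)^p(\sinh r)^{-ap}\,\mathcal G(r)^{-p}$ is explicit and, being scale-invariant in $\mathcal G$, independent of the normalizing constant.

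Next I would split $W_{\mathrm{opt}}=\Lambda_p+W$ and prove $W>0$, i.e. $W_{\mathrm{opt}}(r)>\Lambda_p$ for every $r>0$. As $\Lambda_p=(\tfrac{N-1}{p})^p=(\tfrac{p-1}{p})^p a^p$, this amounts to $|\mathcal G'|/\mathcal G>a$, i.e. to $(\sinh r)^{-a}>a\int_r^\infty(\sinh s)^{-a}\,ds$. Setting $g(r):=(\sinh r)^{-a}-a\int_r^\infty(\sinh s)^{-a}\,ds$ one finds $g'(r)=-a(\sinh r)^{-a-1}e^{-r}<0$ and $g(r)\to0$ as $r\to\infty$, hence $g>0$ on $(0,\infty)$. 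This simultaneously establishes positivity of $W$ and identifies $W=W_{\mathrm{opt}}-\Lambda_p$ as the weight in the improved Poincar\'e inequality.

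Finally, the asymptotics reduce to expanding $\mathcal G$, $\mathcal G'$ and the ratio $|\mathcal G'|/\mathcal G$. Near $r=0$ one uses $\sinh s\sim s$: for $N>p$ the integral diverges like $\tfrac{r^{1-a}}{a-1}$, giving $W_{\mathrm{opt}}\sim(\tfrac{p-1}{p})^p(a-1)^p r^{-p}=(\tfrac{N-p}{p})^p r^{-p}$; for $N=p$ it diverges logarithmically, producing the $r^{-N}(\log\tfrac1r)^{-N}$ term; for $N<p$ it converges to $\int_0^\infty(\sinh s)^{-a}ds$, yielding the constant $C$; since $\Lambda_p$ is bounded these are also the asymptotics of $W$. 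The step I expect to be the main obstacle is the sharp expansion at infinity: writing $(\sinh r)^{-a}=2^a e^{-ar}(1-e^{-2r})^{-a}$ and integrating term by term gives $|\mathcal G'|/\mathcal G=a+\tfrac{2a^2}{a+2}e^{-2r}+o(e^{-2r})$, whence $W=\Lambda_p\tfrac{2pa}{a+2}e^{-2r}+o(e^{-2r})$; substituting $e^{-2r}\sim\tfrac14\sinh(r)^{-2}$ and $a=\tfrac{N-1}{p-1}$ reproduces exactly the constant $\Lambda_p\tfrac{(N-1)p}{2(N-1+2(p-1))}$, with remainder $O(e^{-4r})=o(e^{-3r})$ since only even powers of $e^{-r}$ occur. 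A secondary technical point is justifying the Picone/integration-by-parts step up to the singular pole when $N\ge p$; this is handled by excising a geodesic ball $B_\varepsilon(x_0)$, checking that the boundary flux vanishes as $\varepsilon\to0$, and noting that the distributional singularity of $-\Delta_{p,\hn}\phi$ at $x_0$ only helps the inequality.
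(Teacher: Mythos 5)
Your proposal is correct and follows essentially the same route as the paper: the improved weight is exactly $W=\left(\frac{p-1}{p}\right)^p\left|\frac{G_p'}{G_p}\right|^p-\Lambda_p$ built from the Green's function $G_p(r)=\int_r^\infty(\sinh s)^{-\frac{N-1}{p-1}}\,{\rm d}s$, with the same positivity estimate (your monotonicity of $g(r)=(\sinh r)^{-a}-a\,G_p(r)$ is equivalent to the paper's bound $G_p(r)<\frac{p-1}{N-1}(\sinh r)^{-\frac{N-1}{p-1}}$) and the same asymptotic expansions at $0$ and at infinity, your $e^{-r}$-expansion matching the paper's expansion after the substitution $t=\sinh s$. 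The only cosmetic difference is that you derive the underlying Hardy-type inequality by hand (Picone with the supersolution $G_p^{(p-1)/p}$ plus an excision/capacity argument at the pole), whereas the paper invokes Theorem 2.1 and Corollary 2.3 of the cited work of D'Ambrosio--Dipierro, which encapsulate precisely that argument.
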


 Hence, the given improvement of the Poincar\'e inequality is stated in terms of a weight which is power-like near a given pole but exponentially decaying at infinity. 

\medskip

 In the present paper we construct different examples of weights $W$ for which inequality \eqref{question2} holds and that are slowly decaying at infinity. In any case, due to their asymptotic behavior the weights provided are not globally comparable. For instance, we prove the existence of a weight which is bounded but does not globally vanish at infinity. Finally, in a suitable range of $p$ we improve the Poincar\'e inequality via the Hardy weight $W=\frac{C}{\varrho^p(x,x_0)}$, where $\varrho(x,x_0)$ is the geodesic distance from $x_0\in \hn$ fixed and $C=C(N,p)$ is a positive constant. This choice seems to be the best compromise to capture the non euclidean behavior of inequality \eqref{question2} at infinity without losing too much information at the origin.
An uncertainty principle Lemma for the shifted Laplacian then follows immediately.
The techniques applied in the proofs are: hyperbolic symmetrization and p-convex inequalities together with a suitable transformation which uncovers the Poincar\'e term. Furthermore, super-solution technique and potential inequalities have been exploited.

\vskip8pt
 The paper is organized as follows. In Section 2 we state our main results on $\hn$, Theorems \ref{thphardyMaz}, \ref{mainphardy} and \ref{superphardy} and,
as a byproduct, an improved Uncertainty Principle Lemma in Corollary \ref{upl}. 
Section \ref{MazSec} discusses a related result in the Euclidean half-space, which is the key one to prove some of the results valid on $\hn$ but can have some independent interest, see Theorem \ref{Tid-remake}. 
Section 4 contains, for the convenience of the reader, a concise proof of Proposition \ref{weight1}. Section 5 discusses the proofs of Theorem \ref{Tid-remake} and, consequently, of Theorem~\ref{thphardyMaz}, which is an improvement of the Poincar\'e inequality in terms of a weight having different asymptotics in different ``directions'' and, in particular, not vanishing everywhere at infinity. Theorem \ref{mainphardy}, which states a \it Hardy-type improvement \rm of the Poincar\'e inequality in the spirit of \cite{AK}, \cite{BGG}, is proven in Section 6.
Our final result, Theorem \ref{superphardy}, deals with a related weighted inequality on the whole $\hn$. Even if it is not a direct improvement of the Poincar\'e inequality for $p\neq2$, it has an independent interest in itself due to the asymptotic behavior of the involved weight. It is proved in Section \ref{figSection}, where
as byproduct we obtain a Poincar\'e type inequality on geodesic balls.

 \section{Preliminaries and results}

We have mentioned before that inequality \eqref{p-gap} holds, and that the constant
\begin{equation}\label{LAMBDAp}
\Lambda_{p}:=\left( \frac{N-1}{p} \right)^{p}\,
\end{equation}
 appearing there is optimal. This is in fact a particular case of the work given in \cite{Ngo}, but we provide a simple proof below for the convenience of the reader.

\begin{lem}\label{bottom}
Let $N \geq 2$, $p >1$ and set $\Lambda_{p}$ as in \eqref{LAMBDAp}. There holds

\begin{equation}\label{pbottom}
\inf_{u \in W^{1,p}(\hn) \setminus \{ 0\}} \frac{\int_{\hn} |\nabla_{\hn} u|^{p} \ {\rm d}v_{\hn}}{\int_{\hn} |u|^{p} \ {\rm d}v_{\hn}}=\Lambda_{p}\,.
\end{equation}

\end{lem}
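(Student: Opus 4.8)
The plan is to split \eqref{pbottom} into the two bounds $\inf\ge\Lambda_p$ and $\inf\le\Lambda_p$, the former being the $p$-gap inequality \eqref{p-gap} itself and the latter its sharpness. Since $C_c^\infty(\hn)$ is dense in $W^{1,p}(\hn)$, it is enough to prove \eqref{p-gap} for $u\in C_c^\infty(\hn)$ and, for the reverse bound, to produce a sequence in $C_c^\infty(\hn)$ along which the quotient converges to $\Lambda_p$.

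For the lower bound I would use the vector field (divergence) method, which is the natural $L^p$ analogue of the elementary argument quoted from \cite{DA}. Fix the pole $x_0$, write $r=\varrho(x,x_0)$, and recall $|\nabla_{\hn}r|=1$ and $\Delta_{\hn}r=(N-1)\coth r$. Take the vector field
\[
X:=\Lambda_p^{\frac{p-1}{p}}\,\nabla_{\hn}r=\left(\frac{N-1}{p}\right)^{p-1}\nabla_{\hn}r .
\]
Integrating by parts and then applying Young's inequality in the pointwise form $p\,|u|^{p-1}|\nabla_{\hn}u|\,|X|\le|\nabla_{\hn}u|^p+(p-1)|u|^p|X|^{\frac{p}{p-1}}$ gives
\[
\int_{\hn}|u|^p\,\Div_{\hn}X\;{\rm d}v_{\hn}\le\int_{\hn}|\nabla_{\hn}u|^p\,{\rm d}v_{\hn}+(p-1)\int_{\hn}|u|^p|X|^{\frac{p}{p-1}}\,{\rm d}v_{\hn}.
\]
Since $\Div_{\hn}X=\left(\frac{N-1}{p}\right)^{p-1}(N-1)\coth r$ and $|X|^{\frac{p}{p-1}}=\Lambda_p$, the potential obtained after rearranging is
\[
\Div_{\hn}X-(p-1)|X|^{\frac{p}{p-1}}=\Lambda_p\big(p\coth r-(p-1)\big)\ge\Lambda_p ,
\]
where the last step uses $\coth r\ge1$ for $r>0$. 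This yields \eqref{p-gap}; note that the constant $\left(\frac{N-1}{p}\right)^{p-1}$ in $X$ is precisely the one maximizing the resulting pointwise bound.

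For the reverse inequality I would pass to geodesic polar coordinates, in which a radial $u=u(r)$ satisfies $\int_{\hn}|\nabla_{\hn}u|^p\,{\rm d}v_{\hn}=\omega_{N-1}\int_0^\infty|u'(r)|^p(\sinh r)^{N-1}\,{\rm d}r$ and similarly for $\int_{\hn}|u|^p$. The formal ground state is $\psi(r):=(\sinh r)^{-\frac{N-1}{p}}$, whose logarithmic derivative $\psi'/\psi=-\frac{N-1}{p}\coth r$ tends to $-\frac{N-1}{p}$ as $r\to\infty$, so that $|\psi'/\psi|^p\to\Lambda_p$; this is the borderline decay making $\Lambda_p$ optimal. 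Since $\psi\notin L^p(\hn)$, I would truncate $\psi$ by a cutoff supported on a large annulus and compute the quotient of the resulting $u_n\in C_c^\infty(\hn)$, letting the inner and outer radii diverge. The main obstacle is precisely this last step: one must choose the cutoff (a logarithmic cutoff is typically needed at the critical decay) so that the gradient produced on the transition regions contributes a vanishing amount relative to the bulk integrals, thereby forcing the quotient down to $\Lambda_p$. A minor technical point, to be handled by approximating $r$ away from the pole, is the justification of the integration by parts in the lower bound, where $\coth r$ is singular at $r=0$ although $\Delta_{\hn}r$ remains locally integrable.
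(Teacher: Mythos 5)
Your proposal is correct, but it proceeds quite differently from the paper. The paper proves the lower bound in the upper half-space model: it exhibits the $p$-harmonic function $\rho(x,y)=y^{(N-1)/(p-1)}$ and invokes the general Hardy-type inequality of \cite[Theorem 2.1]{Dambrosio}, which immediately yields $\int_{\hn}|\nabla_{\hn}u|^p\,{\rm d}v_{\hn}\ge\left(\tfrac{p-1}{p}\right)^p\int_{\hn}|u|^p\,\tfrac{|\nabla_{\hn}\rho|^p}{\rho^p}\,{\rm d}v_{\hn}=\Lambda_p\int_{\hn}|u|^p\,{\rm d}v_{\hn}$; your vector-field argument with $X=\Lambda_p^{(p-1)/p}\nabla_{\hn}r$ is a self-contained, intrinsic substitute for that citation (your computation checks out: $\Div_{\hn}X-(p-1)|X|^{p/(p-1)}=\Lambda_p\bigl(p\coth r-(p-1)\bigr)\ge\Lambda_p$, and the singularity of $\coth r$ at the pole is harmless for $N\ge2$). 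For sharpness, the paper does not use cutoffs at all: it writes down explicit subcritical functions $U_\varepsilon(x,y)=\bigl(\tfrac{y}{(1+y)^2+|x|^2}\bigr)^{(N-1+\varepsilon)/p}\in W^{1,p}(\hn)$ whose Rayleigh quotient is bounded by $\left(\tfrac{N-1+\varepsilon}{p}\right)^p$, and lets $\varepsilon\to0$; you instead truncate the critical radial profile $(\sinh r)^{-(N-1)/p}$ on large annuli. Your route works — in fact more easily than you anticipate: no logarithmic cutoff is needed, since $|\psi|^p(\sinh r)^{N-1}\equiv1$ in the radial variable, so the bulk $L^p$ mass grows linearly in the annulus width while a fixed-width transition contributes $O(1)$ to the gradient integral, forcing the quotient to $\Lambda_p$ (note also that the energy density $\Lambda_p(\coth r)^p$ of $\psi$ is not integrable at $r=0$, so supporting the truncation away from the pole, as you do, is essential, not optional). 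What the paper's approach buys is that the half-space computation simultaneously delivers the equivalent Hardy--Maz'ya inequality of its Lemma \ref{lemMaz} and avoids all cutoff bookkeeping via genuine $W^{1,p}$ test functions; what yours buys is a fully elementary, model-free proof that does not rely on an external theorem.
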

\begin{proof}
Considering the upper half space model for $\hn$, namely $\mathbb{R}^{N}_{+} = \{ (x, y) \in \mathbb{R}^{N-1} \times \mathbb{R}^{+} \} $ endowed with the Riemannian metric
$ g_{ij} = \frac{\delta_{ij}}{y^2}$
and using the expression of $p$-Laplacian \eqref{expression_laplacian} in these coordinates we have

\[
\Delta_{p,\hn} u =  y^{N} \partial_{i} ( y^{p-N} |\nabla u|^{p-2} \partial_{i} u).
\]

By computing
$-\Delta_{p, \hn}$ for the function $\rho(x, y) : = y^\alpha\in W^{1,p}_{loc}(\hn )$ where $\alpha :=\frac{N-1}{p-1}$, one has

\[
-\Delta_{p, \hn} \rho = \alpha^{p-2} \alpha (N -1 - \alpha(p -1)) y^{\alpha(p-1)}=0.
\]
Now we are in the position to apply Theorem 2.1 of \cite{Dambrosio}, obtaining

$$\int_{\hn} |\nabla_{\hn} u|^{p} \ {\rm d}v_{\hn} \geq \left(\frac{p-1}{p}\right)^p
 \int_{\hn} |u|^{p} \frac{|\nabla_{\hn} \rho|^{p}}{\rho^p}\ {\rm d}v_{\hn} =
 \Lambda_p \int_{\hn} |u|^{p} \ {\rm d}v_{\hn}$$
for all $u \in C_c^\infty(\hn)$ and hence, by density, for all $u \in W^{1,p}(\hn)$.

On the other hand, for $\varepsilon>0$, set
$$U_{\varepsilon}(x,y)=\left(\frac{y}{(1+y)^2+|x|^2}\right)^{\frac{N-1+\varepsilon}{p} }\,.$$
Since in the coordinates $(x,y)$ the volume element reads ${\rm d}v_{\hn}=\frac{{\rm d}x\, {\rm d}y}{y^N}$ and $\nabla_{\hn} u=y^2 \nabla u$, we get
\begin{align*}
\int_{\hn} | U_{\varepsilon}|^{p} \ {\rm d}v_{\hn} =\int_{\mathbb R^{+}} \int_{\mathbb{R}^{N-1}} \left(\frac{y}{(1+y)^2+|x|^2}\right)^{N-1+\varepsilon }\, \frac{{\rm d}x\,{\rm d}y}{y^N}
\end{align*}
and
$$\int_{\hn} |\nabla_{\hn} U_{\varepsilon}|^{p} \ {\rm d}v_{\hn} $$
$$= \left(\frac{N-1+\varepsilon}{p} \right)^p\,\int_{\mathbb R^{+}} \int_{\mathbb{R}^{N-1}} \left(\frac{(1-y^2+|x|^2)^2+4|x|^2y^2}{((1+y)^2+|x|^2)^2}\right)^{p/2} \left(\frac{y}{(1+y)^2+|x|^2}\right)^{N-1+\varepsilon }\, \frac{{\rm d}x\, {\rm d}y}{y^{N}}$$
$$\leq \left(\frac{N-1+\varepsilon}{p} \right)^p\,\int_{\mathbb R^{+}} \int_{\mathbb{R}^{N-1}}  \left(\frac{y}{(1+y)^2+|x|^2}\right)^{N-1+\varepsilon }\, \frac{{\rm d}x\, {\rm d}y}{y^{N}}$$
Hence, $U_{\varepsilon}(x,y)\in W^{1,p}(\hn)$ for $\varepsilon>0$ and
$\frac{\int_{\hn} |\nabla_{\hn} U_{\varepsilon}|^{p} \ {\rm d}v_{\hn}}{\int_{\hn} |U_{\varepsilon}|^{p} \ {\rm d}v_{\hn}} \leq \left(\frac{N-1+\varepsilon}{p} \right)^p$.
By letting $\varepsilon \rightarrow 0$, this argument completes the proof of the lemma.
\medskip
\end{proof}
Now we are in a situation to state our main results.

\medskip

In first place, by exploiting the half-space model for $\hn$ and following the approach of \cite{Tid}, here below we provide a weight that does not globally decay at infinity but which is bounded near $x_0$. Hence, this choice turns out to be best suited to capture the non euclidean behavior of $\hn$ which occurs at infinity. More precisely, we prove

\begin{thm}\label{thphardyMaz}
Let $p >1$, $N \geq  2$ and set $\Lambda_{p}$ as in \eqref{LAMBDAp}. There exists a bounded weight $0 <V\leq 1$ such that for all $u \in C_{c}^{\infty}(\hn)$ there holds

\begin{align}\label{phardyMaz}
\int_{{\mathbb H}^N}|\nabla_{\hn} u|^p\,{\rm d}v_{\hn}-\Lambda_p  \int_{{\mathbb H}^N} |u|^{p} \ {\rm d}v_{\hn}
 \geq
   \left( \frac{N-1}{p} \right)^{p-2}  C(N,p)   \int_{{\mathbb H}^N} V\,|u|^p \ {\rm d}v_{\hn},
\end{align}
where $C(N,p)$ is a positive constant that can be explicitily computed for which the following estimates hold
\begin{equation}
  \label{cnp}
\begin{aligned}
  C(N,p)\ge&\frac{1}{4p'}, & if\ 1<p\le 4/3,\\
  C(N,p)\ge&\left(2(8-3p)+2\sqrt{p'(8-3p)}  \right)^{-1}, & if\ 4/3<p\le 2, \\
  C(N,p)=&\frac{1}{\sqrt 2}\, \frac{1}{\sqrt 2\, p + 2 \sqrt p},  & if\  2< p\le 2(N-1)^2, \\
  C(N,p)=&\left(\frac{p}{N-1}+2p+2(N-1)\right)^{-1},& if\   p> 2(N-1)^2,  \\
\end{aligned}
\end{equation}
where $p'>1$ denotes the conjugate exponent of $p$.
\medskip

Furthermore, set $r:=\varrho(x,x_0)$ with $x_0\in{\mathbb H}^N$ fixed, we have
\begin{itemize}
\item for any $0<\alpha\leq 1$ there exists an unbounded set $U_{\alpha}\subset \hn$ such that $V\vert_{U_{\alpha}}\equiv\alpha$ and $U_{\alpha} \cap (B(x_0,2r)\setminus B(x_0,r)) \neq \emptyset$ as $r\rightarrow +\infty$;
\item for any $\beta>0$ there exists an unbounded set $W_{\beta}\subset \hn$ such that $V\vert_{W_{\beta}}\sim \sqrt{\frac{\beta}{2}}\,e^{-r/2}$ as $r\rightarrow +\infty$.
\end{itemize}
\end{thm}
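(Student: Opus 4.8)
The plan is to pass to the upper half-space model $\mathbb{R}^N_+$ already used in Lemma~\ref{bottom} and to reformulate \eqref{phardyMaz} as a purely Euclidean weighted inequality, which is the content of Theorem~\ref{Tid-remake}. Recalling that there $\mathrm{d}v_{\hn}=y^{-N}\,\mathrm{d}x\,\mathrm{d}y$ and $|\nabla_{\hn}u|=y|\nabla u|$, so that $|\nabla_{\hn}u|^p\,\mathrm{d}v_{\hn}=y^{p-N}|\nabla u|^p\,\mathrm{d}x\,\mathrm{d}y$, inequality \eqref{phardyMaz} is equivalent to
\[
\int_{\mathbb{R}^N_+} y^{p-N}|\nabla u|^p\,\mathrm{d}x\,\mathrm{d}y-\Lambda_p\int_{\mathbb{R}^N_+} y^{-N}|u|^p\,\mathrm{d}x\,\mathrm{d}y\ \ge\ \Big(\tfrac{N-1}{p}\Big)^{p-2}C(N,p)\int_{\mathbb{R}^N_+} V\, y^{-N}|u|^p\,\mathrm{d}x\,\mathrm{d}y,
\]
where $V$ is now read as a function of $(x,y)$. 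Thus it suffices to establish this degenerate Hardy--Maz'ya inequality on the half-space; the passage back to $\hn$ is only the above change of variables.

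To prove it I would first uncover the Poincaré term through the ground-state substitution $u=y^{\gamma}v$ with $\gamma:=(N-1)/p$, this being the borderline exponent for which $y^{\gamma}$ is the virtual extremal of the one-dimensional (in $y$) weighted Hardy inequality hidden in \eqref{p-gap}. Writing $\nabla u=a+b$ with $a:=\gamma y^{\gamma-1}v\,e_y$ and $b:=y^{\gamma}\nabla v$, I would apply a pointwise $p$-convexity inequality of the form $|a+b|^p\ge |a|^p+p|a|^{p-2}a\cdot b+c(p)\,(|a|+|b|)^{p-2}|b|^2$. A direct computation shows that $\int y^{p-N}|a|^p$ carries the weight $y^{-1}$ with coefficient $\gamma^p=\Lambda_p$, hence cancels the Poincaré term \emph{exactly}; moreover, since $a\parallel e_y$, the cross term collapses to $\gamma^{p-1}\int\partial_y(|v|^p)\,\mathrm{d}x\,\mathrm{d}y$, which vanishes upon integrating in $y$ (the support being compact in $\{y>0\}$). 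One is left with the remainder; setting $w:=|v|^{p/2}$ and using $|v|^{p-2}|\nabla v|^2=\tfrac{4}{p^2}|\nabla w|^2$ together with $|a|^{p-2}=\gamma^{p-2}y^{(\gamma-1)(p-2)}|v|^{p-2}$, a computation of exponents shows that this remainder is bounded below by a multiple of $\gamma^{p-2}\int_{\mathbb{R}^N_+}y\,|\nabla w|^2\,\mathrm{d}x\,\mathrm{d}y$; the factor $\gamma^{p-2}=\big(\tfrac{N-1}{p}\big)^{p-2}$ is precisely the one in \eqref{phardyMaz}, coming from the $|a|^{p-2}$ in the convexity remainder.

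The problem is thereby reduced to the single $L^2$ weighted inequality $\int_{\mathbb{R}^N_+}y\,|\nabla w|^2\ge C''\int_{\mathbb{R}^N_+}V\,y^{-1}w^2$ with a \emph{bounded} weight $0<V\le1$. Note that the $y$-direction alone is critical (a profile $\phi=\phi(y)$ gives no gain), so the improvement must exploit the full $(x,y)$ geometry. I would obtain it by the supersolution method recalled in the introduction: for any vector field $F$, integration by parts gives $\int y|\nabla w|^2\ge\int\big(-\operatorname{div}(yF)-y|F|^2\big)w^2$, and choosing $F=\nabla\phi/\phi$ for a suitable explicit \emph{non-radial} positive supersolution of $-\operatorname{div}(y\nabla\phi)\ge C\,V\,y^{-1}\phi$ delivers the inequality with $V$ read off from $\phi$. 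As for the asymptotics, $V$ is the same function on $\hn$ and on the half-space, so the two items are verified by evaluating the explicit $V(x,y)$ along curves approaching the ideal boundary and converting through $\cosh r=1+\tfrac{|x|^2+(y-1)^2}{2y}$ (pole $x_0=(0,1)$): on conical directions $y\sim\lambda|x|$ the relevant ratio is constant, giving the sets $U_\alpha$ with $V\equiv\alpha$, while along directions with $|x|^2+y^2\sim\tfrac{2}{\beta}y^3$ one finds $V\sim\sqrt{\beta/2}\,e^{-r/2}$, giving $W_\beta$.

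The main obstacle is the sharp $p$-convexity step. For $p\ge2$ one has $(|a|+|b|)^{p-2}\ge|a|^{p-2}$ and the reduction to $L^2$ is clean, which is why \eqref{cnp} records exact values of $C(N,p)$ there, the threshold $p=2(N-1)^2$ being produced by balancing $c(p)$ against the $N$-dependent constant of the $L^2$ inequality. For $1<p<2$, instead, $(|a|+|b|)^{p-2}\le|a|^{p-2}$ points the wrong way, so one must split according to whether $|b|\le|a|$ or $|b|>|a|$ and treat the two regions separately; this is delicate, yields only the lower bounds on $C(N,p)$ displayed in \eqref{cnp}, and is responsible for the intermediate threshold $p=4/3$. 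Keeping $V$ simultaneously bounded by $1$ and non-vanishing in the prescribed directions throughout this optimization is where the bulk of the work lies.
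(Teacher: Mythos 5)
Your outer reduction is exactly the paper's: pass to the half-space model, prove the Euclidean inequality of Theorem \ref{Tid-remake} with the explicit weight $V=y/\sqrt{y^2+x_1^2}$, then read off the $U_\alpha$ and $W_\beta$ asymptotics through $\cosh r = 1+\frac{|x|^2+(y-1)^2}{2y}$. Where you diverge is the core: the paper proves \eqref{phardyTid} by applying Tidblom's vector-field inequality (Lemma \ref{Tid21}, specialized in Lemma \ref{Tid21BIS}) to \emph{two} explicit fields ${\mathbf F}_1$, ${\mathbf F}_2$ built from $w=y/\sqrt{y^2+x_1^2}$, controls the resulting terms with the elementary inequality \eqref{ni}, and then takes a convex combination of the two resulting bounds, optimizing over the parameters; this works uniformly for all $p>1$ and is what produces the explicit constants \eqref{cnp}. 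Your route instead is a ground-state substitution $u=y^{(N-1)/p}v$ plus $p$-convexity, reducing to the single weighted $L^2$ inequality $\int_{\mathbb{R}^N_+} y|\nabla w|^2 \ge C\int_{\mathbb{R}^N_+} V y^{-1}w^2$. Your exponent bookkeeping there is correct (the $|a|^p$ term reproduces $\Lambda_p$ exactly, and the cross term is $\gamma^{p-1}\int\partial_y(|v|^p)=0$), so for $p\ge 2$ this is a legitimate, genuinely different reduction. But note what it reduces \emph{to}: that $L^2$ inequality is precisely the $p=2$ case of Theorem \ref{Tid-remake}, i.e. the entire geometric content of the result. At this point you only gesture at a ``suitable explicit non-radial positive supersolution $\phi$'' with ``$V$ read off from $\phi$.'' This is the genuine gap, and it also inverts the logic: the theorem requires a \emph{specific} $V$ (bounded by $1$, identically $\alpha$ on cones $x_1=ky$, decaying like $e^{-r/2}$ on horizontal lines), so $V$ cannot simply be whatever a chosen $\phi$ produces. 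A supersolution with this weight does exist -- writing $(x_1,y)=(\rho\cos\theta,\rho\sin\theta)$ one needs a positive $\phi(\theta)$ with $-(\sin\theta\,\phi')'\ge C\phi$ on $(0,\pi)$, a Legendre-type problem whose positive supersolution must blow up at $\theta=0,\pi$ (simple candidates like $\sin\theta$, $\sqrt{\sin\theta}$, or $\theta(\pi-\theta)$ all fail) -- but constructing it, or otherwise proving the $L^2$ inequality, is the heart of the matter and is entirely missing from your proposal.

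Two further points. First, for $1<p<2$ your plan is only a sketch: splitting pointwise into $\{|b|\le|a|\}$ and $\{|b|>|a|\}$ gives, on the second region, a term $\sim|b|^p$, which would require a separate degenerate $p$-Hardy inequality $\int y^{p-1}|\nabla v|^p \gtrsim \int V y^{-1}|v|^p$; moreover the two regions depend on $u$, so they cannot simply be ``treated separately'' at the level of the final weighted integral. The paper avoids this entirely since the vector-field method never linearizes. Second, the specific values and thresholds in \eqref{cnp} (the exact formula $\frac{1}{\sqrt2}\frac{1}{\sqrt2 p+2\sqrt p}$, and the breakpoints $p=4/3$ and $p=2(N-1)^2$) are artifacts of the paper's optimization over the parameters $a,c$ in ${\mathbf F}_1,{\mathbf F}_2$ and of the piecewise constant $q_b$ in \eqref{ni}; your post-hoc explanations of these thresholds in terms of your own scheme are unsupported, and a completed version of your argument would yield different (likely non-explicit) constants, hence a weaker statement than \eqref{cnp} as written.
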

It is worth noticing that the weight $V$ can be written, in the half-space model, as $V(x_1,...,x_{N-1},y):=\frac{y}{\sqrt{y^2+x_1^2}}$, see Theorem \ref{Tid-remake} in Section \ref{MazSec} from which the above statements follow.

\medskip

Even if both the inequalities provided by Proposition \ref{weight1} and Theorem \ref{thphardyMaz} are of the form \eqref{question2} they seem to lose too much information, respectively,  at infinity or near the origin. To this aim, a good compromise is represented by the following Poincar\'e-Hardy inequality

\begin{thm}\label{mainphardy}
Let $p \geq  2$ and  $N \geq 1 + p(p -1).$ Set $\Lambda_p$ as in \eqref{LAMBDAp} and $r:=\varrho(x,x_0)$ with $x_0\in{\mathbb H}^N$ fixed. Then for $u \in C_{c}^{\infty}(\hn)$ there holds

\begin{equation}\label{eqphardy}\begin{aligned}
\int_{{\mathbb H}^N}|&\nabla_{\hn} u|^p\,{\rm d}v_{\hn}-\Lambda_p  \int_{{\mathbb H}^N} |u|^{p} \ {\rm d}v_{\hn}\\
 &\geq
   (p-1) \left( \frac{N-1}{p} \right)^{p-2} \left( \frac{p-1}{p} \right)^2  \int_{{\mathbb H}^N} \frac{|u|^p}{r^p} \ {\rm d}v_{\hn}.
\end{aligned}\end{equation}
\end{thm}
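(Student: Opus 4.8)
The plan is to combine hyperbolic symmetrization with a ground–state (super–solution) substitution that flattens the Poincaré weight, thereby reducing \eqref{eqphardy} to a genuinely one–dimensional weighted Hardy inequality which is tailor–made for the range $p\ge2$. First I would reduce to radial functions. Writing $u^{*}$ for the decreasing geodesic rearrangement of $u$, the Pólya–Szegő inequality on $\hn$ gives $\int_{\hn}|\nabla_{\hn}u^{*}|^{p}\,{\rm d}v_{\hn}\le\int_{\hn}|\nabla_{\hn}u|^{p}\,{\rm d}v_{\hn}$, rearrangement preserves $\int_{\hn}|u|^{p}\,{\rm d}v_{\hn}$, and since $r^{-p}$ is radially decreasing the Hardy–Littlewood inequality yields $\int_{\hn}|u|^{p}r^{-p}\,{\rm d}v_{\hn}\le\int_{\hn}|u^{*}|^{p}r^{-p}\,{\rm d}v_{\hn}$. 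Chaining these three facts shows it suffices to establish \eqref{eqphardy} for radial $u=u(r)$.

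Next, in geodesic polar coordinates $\big(\,{\rm d}v_{\hn}=(\sinh r)^{N-1}\,{\rm d}r\,{\rm d}\omega\big)$ I would set $u=(\sinh r)^{-\gamma}w$ with $\gamma:=(N-1)/p$, chosen precisely so that $(\sinh r)^{-\gamma p}(\sinh r)^{N-1}\equiv1$, i.e. so that the measure becomes flat. Since $u'=(\sinh r)^{-\gamma}\big(w'-\gamma\coth r\,w\big)$, the three relevant integrals collapse (up to the factor $|\mathbb S^{N-1}|$) to $\int_{0}^{\infty}|w'-\gamma\coth r\,w|^{p}\,{\rm d}r$, $\int_{0}^{\infty}|w|^{p}\,{\rm d}r$ and $\int_{0}^{\infty}|w|^{p}r^{-p}\,{\rm d}r$; recalling $\Lambda_p=\gamma^{p}$, the statement becomes the clean inequality
\[
\int_{0}^{\infty}|w'-\gamma\coth r\,w|^{p}\,{\rm d}r-\gamma^{p}\int_{0}^{\infty}|w|^{p}\,{\rm d}r\ \ge\ (p-1)\gamma^{p-2}\Big(\tfrac{p-1}{p}\Big)^{2}\int_{0}^{\infty}\frac{|w|^{p}}{r^{p}}\,{\rm d}r .
\]

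The heart of the argument is then a second–order convexity estimate for $p\ge2$: with $a=-\gamma\coth r\,w$ and $h=w'$ one expands $|a+h|^{p}$ retaining the quadratic term, of the schematic form $|a+h|^{p}\ge|a|^{p}+p|a|^{p-2}ah+c_p|a|^{p-2}h^{2}$. After integrating by parts the cross term $-\int(\gamma\coth r)^{p-1}(|w|^{p})'\,{\rm d}r$ (the boundary contributions vanish because $w$ is compactly supported and $w=O(r^{\gamma})$ near $0$), the first two terms reproduce $\gamma^{p}\int|w|^{p}$ together with the surplus $\gamma^{p-1}\int\big[A(\coth^{p}r-1)+(p-1)(\coth^{p-2}r-1)\big]|w|^{p}\,{\rm d}r$, where $A:=(N-1-p(p-1))/p$. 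The hypothesis $N\ge1+p(p-1)$ is exactly what forces $A\ge0$, so that this surplus is nonnegative and may be discarded; the surviving quadratic remainder, using $\gamma\coth r\ge\gamma/r$, dominates $c_p\,\gamma^{p-2}\int_{0}^{\infty}r^{2-p}|w|^{p-2}(w')^{2}\,{\rm d}r$.

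Finally I would put $W:=|w|^{p/2}$, so that $|w|^{p-2}(w')^{2}=\tfrac{4}{p^{2}}(W')^{2}$, and invoke the weighted one–dimensional Hardy inequality $\int_{0}^{\infty}r^{2-p}(W')^{2}\,{\rm d}r\ge\big(\tfrac{p-1}{2}\big)^{2}\int_{0}^{\infty}r^{-p}W^{2}\,{\rm d}r$; since $\tfrac{4}{p^{2}}\big(\tfrac{p-1}{2}\big)^{2}=\big(\tfrac{p-1}{p}\big)^{2}$, this yields the right-hand side above with constant $c_p\,\gamma^{p-2}\big(\tfrac{p-1}{p}\big)^{2}$, and undoing the symmetrization finishes the proof. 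The main obstacle is precisely the convexity step: one must pin down the sharp admissible constant (the stated bound corresponds to $c_p=p-1$) in a pointwise inequality valid on \emph{all} of $(0,\infty)$, where $w'$ is unconstrained in sign and magnitude. Because a globally valid scalar inequality with $c_p=p-1$ fails, making this rigorous will likely require balancing the discarded Poincaré surplus against the quadratic remainder rather than estimating each in isolation, together with careful justification of the integrations by parts and of the convergence of all integrals at $r=0$ (ensured since $N>p$ under the standing hypothesis).
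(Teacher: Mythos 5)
Your outline coincides, step for step, with the paper's own proof: hyperbolic symmetrization, the flattening substitution $v=(\sinh r)^{(N-1)/p}u$, a $p$-convexity expansion retaining the quadratic term, integration by parts of the cross term, discarding of the nonnegative surplus under $N\ge 1+p(p-1)$ (your expression $\gamma^{p-1}\bigl[A(\coth^{p}r-1)+(p-1)(\coth^{p-2}r-1)\bigr]$ is exactly the paper's condition $F(r)\ge1$ in disguise), and a one-dimensional weighted Hardy inequality for the quadratic remainder (your substitution $W=|w|^{p/2}$ combined with the classical weighted Hardy inequality is equivalent to the paper's Lemma \ref{lemhardy} with $l=2$, proved there by a H\"older argument together with $\coth r\ge 1/r$).

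However, your proposal stalls at precisely the step you flag as the main obstacle, and the gap is genuine: you assert that the pointwise inequality $|a+h|^{p}\ge|a|^{p}+p|a|^{p-2}ah+(p-1)|a|^{p-2}h^{2}$ fails because ``$w'$ is unconstrained in sign and magnitude,'' and you then speculate that one must balance the discarded Poincar\'e surplus against the quadratic remainder. The missing observation — and the one the paper uses — is that $w'$ is \emph{not} unconstrained: after rearrangement $u=u^{*}$ is nonincreasing, so $w'-\gamma\coth r\,w=u'(r)(\sinh r)^{\gamma}\le0$ pointwise. Writing $\xi=\gamma\coth r\,w\ge0$ and $\eta=w'$, this is exactly the constraint $\xi-\eta\ge0$ under which the convexity inequality
\begin{equation*}
(\xi-\eta)^{p}\ \ge\ \xi^{p}-p\,\xi^{p-1}\eta+(p-1)\,\eta^{2}\xi^{p-2},\qquad p\ge2,
\end{equation*}
is valid; this is the paper's Lemma \ref{convexity}, taken from \cite{gaz}. (For instance, with $p=3$, $\xi=1$, the difference of the two sides equals $\eta^{2}(1-\eta)$, which is nonnegative exactly when $\eta\le\xi$ — so the constraint is both necessary and sufficient here.) Thus the monotonicity produced by the very symmetrization you invoke in your first step is what makes $c_{p}=p-1$ admissible on the relevant half-line; no balancing of the surplus against the remainder is needed, and each term can be estimated in isolation exactly as you intended. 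With that single observation supplied, your argument closes and becomes the paper's proof.
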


\begin{rem}
From the above theorem, we can easily infer that the best constant in the r.h.s. of \eqref{eqphardy}, i.e.
\[
c_{p} :
= \inf_{C_{c}^{\infty}(\hn) \setminus \{ 0 \}} \frac{\int_{{\mathbb H}^N}|\nabla_{\hn} u|^p\,{\rm d}v_{\hn}-
\Lambda_p  \int_{{\mathbb H}^N} |u|^{p} \ {\rm d}v_{\hn}  }{\int_{{\mathbb H}^N} \frac{|u|^p}{r^p} \ {\rm d}v_{\hn}}\,,
\]
blows up as $N \rightarrow \infty$ if $p>2$. This does not happen in the linear case $p = 2,$ where $c_{2} = \frac{1}{4}$, see \eqref{L2}, where it is known that the constant $c_2$ is optimal. This issue was proved in \cite{BGG} by providing an explicit super-solution for the corresponding Euler-equation, a construction that also allows to determine a remainder term for \eqref{L2} of the type $\frac{1}{\sinh^2 r}$, see Remark \ref{c2}. Unfortunately, this argument carries over to the case $p>2$ only partially thereby allowing to prove Theorem \ref{hardyball} on suitable geodesic balls.
\end{rem}

As an immediate consequence of the previous result one gets the following \it uncertainty principle \rm for the quadratic form of the shifted Laplacian. For a similar result, when $p=2$, concerning the quadratic form of the Laplacian, see \cite[Theorem 4.1]{Kombe2}.

\begin{cor}\label{upl}
Let $p \geq  2$ and  $N \geq 1 + p(p -1).$ Set $\Lambda_p$ as in \eqref{LAMBDAp} and $r:=\varrho(x,x_0)$ with $x_0\in{\mathbb H}^N$ fixed. Then for $u \in C_{c}^{\infty}(\hn)$ there holds:
\begin{equation}\begin{aligned}
\left[\int_{{\mathbb H}^N}|\nabla_{\hn} u|^p\,{\rm d}v_{\hn}-\Lambda_p  \int_{{\mathbb H}^N} |u|^{p} \ {\rm d}v_{\hn}\right]\,\left[\int_{{\mathbb H}^N} |u|^{p}\,r^{p'}\ {\rm d}v_{\hn}\right]^{\frac p{p'}}\\
\ge (p-1) \left( \frac{N-1}{p} \right)^{p-2} \left( \frac{p-1}{p} \right)^2 \left[\int_{{\mathbb H}^N}|u|^{p}\ {\rm d}v_{\hn}\right]^p,
\end{aligned}
\end{equation}
where $p'>1$ denotes the conjugate exponent of $p$.
\end{cor}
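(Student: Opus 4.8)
The plan is to derive Corollary \ref{upl} directly from Theorem \ref{mainphardy} by a single application of H\"older's inequality, treating the Hardy term as a product that can be split between the weight $r^{-p}$ and a compensating factor $r^{p'}$. The corollary is an uncertainty-principle reformulation: it asserts that the product of the ``energy gap'' and a suitable moment of $|u|^p$ dominates a power of the $L^p$-norm, and such statements are the standard dual packaging of a weighted inequality in which the weight appears in a denominator.

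\medskip

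First I would observe that Theorem \ref{mainphardy} already gives, under the hypotheses $p\ge 2$ and $N\ge 1+p(p-1)$,
\begin{equation*}
\int_{{\mathbb H}^N}|\nabla_{\hn} u|^p\,{\rm d}v_{\hn}-\Lambda_p\int_{{\mathbb H}^N}|u|^p\,{\rm d}v_{\hn}\ge (p-1)\left(\frac{N-1}{p}\right)^{p-2}\left(\frac{p-1}{p}\right)^2\int_{{\mathbb H}^N}\frac{|u|^p}{r^p}\,{\rm d}v_{\hn}.
\end{equation*}
It therefore suffices to bound the right-hand integral from below by the desired product. The idea is to recover $\int_{{\mathbb H}^N}|u|^p\,{\rm d}v_{\hn}$ by interpolating between $\int \frac{|u|^p}{r^p}\,{\rm d}v_{\hn}$ and $\int |u|^p r^{p'}\,{\rm d}v_{\hn}$. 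Writing $|u|^p=\big(|u|^p r^{-p}\big)^{\theta}\big(|u|^p r^{p'}\big)^{1-\theta}$ and requiring the powers of $r$ to cancel forces $-p\theta+p'(1-\theta)=0$, i.e.\ $\theta=\frac{p'}{p+p'}$; since $\frac1p+\frac1{p'}=1$ gives $p+p'=pp'$, one finds $\theta=\frac1p$ and $1-\theta=\frac1{p'}$. Thus by H\"older's inequality with exponents $p$ and $p'$,
\begin{equation*}
\int_{{\mathbb H}^N}|u|^p\,{\rm d}v_{\hn}\le\left(\int_{{\mathbb H}^N}\frac{|u|^p}{r^p}\,{\rm d}v_{\hn}\right)^{\!1/p}\left(\int_{{\mathbb H}^N}|u|^p r^{p'}\,{\rm d}v_{\hn}\right)^{\!1/p'}.
\end{equation*}
Rearranging this yields
\begin{equation*}
\int_{{\mathbb H}^N}\frac{|u|^p}{r^p}\,{\rm d}v_{\hn}\ge\frac{\left(\int_{{\mathbb H}^N}|u|^p\,{\rm d}v_{\hn}\right)^p}{\left(\int_{{\mathbb H}^N}|u|^p r^{p'}\,{\rm d}v_{\hn}\right)^{p/p'}},
\end{equation*}
and multiplying through by the moment factor and substituting into the Theorem \ref{mainphardy} bound gives exactly the claimed inequality.

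\medskip

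There is essentially no analytic obstacle here: the only mild point to check is that the weighted quantities are finite, which is automatic for $u\in C_c^\infty(\hn)$ since the support is bounded away from neither singularity is problematic ($r^{-p}$ is integrable against $|u|^p$ on compact sets because $u$ vanishes near $x_0$ when $N\ge 1+p(p-1)$ guarantees the Hardy term is meaningful, and $r^{p'}$ is bounded on compact sets). The genuinely load-bearing step is Theorem \ref{mainphardy} itself; the corollary is a purely algebraic consequence, so the ``hard part'' has already been done upstream. I would present the proof as the two displayed steps above followed by the substitution, with a one-line remark that the exponent bookkeeping $\theta=1/p$ is what makes the powers of $r$ cancel.
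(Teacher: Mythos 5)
Your proof is correct and is essentially the same as the paper's: the paper applies H\"older's inequality to the splitting $|u|^p=\frac{|u|}{r}\,|u|^{p-1}r$, which is exactly your factorization $\bigl(|u|^p r^{-p}\bigr)^{1/p}\bigl(|u|^p r^{p'}\bigr)^{1/p'}$ written multiplicatively, and then concludes by substituting into Theorem \ref{mainphardy}. The only blemish is your garbled finiteness remark, which is in any case unnecessary since the inequality holds trivially when the Hardy integral diverges.
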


\begin{rem}
In Theorem \ref{mainphardy}, the restrictions $p\geq 2$ and $N \geq 1 + p(p -1)$ are technical. In particular, the latter only comes from the last step in the proof. Nevertheless, the very same assumption also appears in the Poincar\'e-Hardy inequality below where the constant $\Lambda_p$
in (\ref{eqphardy}) is replaced by a non-constant weight: $\Lambda_p\,H_p(r)$. Here, $H_p(r)$ is a positive function which is larger then one in $(0,r_p)$, smaller then one in $(r_p,+\infty)$, and that converges to one as $r\to+\infty$, see Figure 1 in Section \ref{figSection}.
Since the proofs of the two theorems are completely different, we are led to believe that a deeper relation between the dimension restriction and the weight considered might exist.
\end{rem}

\begin{thm}\label{superphardy}

Let $p \geq  2$ and  $N \geq 1 + p(p -1).$ Set $\Lambda_{p}$ as in \eqref{LAMBDAp} and $r := \varrho(x, x_{0})$ with $x_0\in \hn$ fixed. Then for $u \in C_{c}^{\infty}(\hn)$ there holds

\begin{equation}\begin{aligned}\label{eqphardy2}
&\int_{{\mathbb H}^N}|\nabla_{\hn} u|^p\,{\rm d}v_{\hn}-\Lambda_p \int_{{\mathbb H}^N} H_{p}(r) |u|^{p} \ {\rm d}v_{\hn}   \geq\\ &\frac{(p-1)^{p-1} (N(p -2) + 1)}{p^p}   \int_{{\mathbb H}^N} \frac{|u|^p}{r^p} \ {\rm d}v_{\hn} \\
 &+ \frac{(N-1)(N-1 -p(p-1))(p - 1)^{p-2}}{p^{p}} \int_{\hn} \frac{|u|^{p}}{\sinh^{p} r} \ {\rm d}v_{\hn}
\end{aligned}
\end{equation}
where $H_{p}(r) = \left( \coth r - \left(\frac{p-1}{N-1}\right)  \frac{1}{r} \right)^{p -2}.$
\end{thm}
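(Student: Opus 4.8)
The plan is to reduce \eqref{eqphardy2} to a single pointwise computation through the potential (Picone-type) inequality for the $p$-Laplacian: for every positive $\rho\in C^{1}(\hn\setminus\{x_{0}\})$ and every $u\in C_{c}^{\infty}(\hn)$,
\[
\int_{\hn}|\nabla_{\hn}u|^{p}\,{\rm d}v_{\hn}\;\ge\;\int_{\hn}\frac{-\Delta_{p,\hn}\rho}{\rho^{p-1}}\,|u|^{p}\,{\rm d}v_{\hn},
\]
which is the supersolution/potential technique already invoked, in a slightly different guise, in Lemma \ref{bottom} and \cite{Dambrosio}. (The singularity of the weight at $x_{0}$ is harmless: since $N>p$ it is locally integrable, and the inequality is justified by a standard cut-off near $x_{0}$.) It then suffices to produce a radial $\rho$ whose potential dominates the full right-hand side of \eqref{eqphardy2} together with the shifted Poincar\'e weight $\Lambda_{p}H_{p}(r)$. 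Guided by the shape of $H_{p}$, I would take
\[
\rho(r):=(\sinh r)^{-\frac{N-1}{p}}\,r^{\frac{p-1}{p}},\qquad g(r):=\coth r-\frac{p-1}{N-1}\,\frac1r,
\]
so that $\rho'/\rho=-\tfrac{N-1}{p}\,g$ and, crucially, $H_{p}(r)=g^{\,p-2}$. Since $\coth r\ge1/r$ and $\tfrac{p-1}{N-1}\le1$ one has $g>0$, hence $\rho>0$ is strictly decreasing and the radial formula $-\Delta_{p,\hn}\rho=(\sinh r)^{1-N}\big((\sinh r)^{N-1}|\rho'|^{p-1}\big)'$ applies.

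Carrying out this derivative, dividing by $\rho^{p-1}$, and writing $A:=\left(\tfrac{N-1}{p}\right)^{p-1}$, I expect the expression to collapse into the three blocks
\[
\frac{-\Delta_{p,\hn}\rho}{\rho^{p-1}}=\Lambda_{p}\,g^{\,p-2}
+A\,\frac{N-1-p(p-1)}{p}\,\frac{g^{\,p-2}}{\sinh^{2}r}
+A\,\frac{p-1}{p}\,\frac{g^{\,p-2}}{r}\left[\frac{(p-1)^{2}}{N-1}\,\frac1r+(p-2)\,g\right].
\]
Here the first term is exactly $\Lambda_{p}H_{p}(r)$, and both corrections are nonnegative because $p\ge2$, $g>0$, and the hypothesis $N\ge1+p(p-1)$ makes $N-1-p(p-1)\ge0$. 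Obtaining this identity is the computational heart of the argument: the raw output of the derivative carries mixed powers $g^{p}$, $g^{p-1}/r$, $g^{p-2}/r^{2}$ and $g^{p-2}/\sinh^{2}r$, and only after writing $\Lambda_{p}g^{p}=\Lambda_{p}g^{p-2}+\Lambda_{p}g^{p-2}(g^{2}-1)$ and using $\coth^{2}r-1=\sinh^{-2}r$ do the coefficients conspire to leave precisely $\Lambda_{p}H_{p}$ plus the two signed remainders.

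It remains to bound the two corrections below by the advertised multiples of $\sinh^{-p}r$ and $r^{-p}$; write $c_{1},c_{2}$ for the coefficients of $\int r^{-p}|u|^{p}$ and $\int\sinh^{-p}r\,|u|^{p}$ in \eqref{eqphardy2}. For the middle term I would use $\tfrac{g^{p-2}}{\sinh^{2}r}=\tfrac{(g\sinh r)^{p-2}}{\sinh^{p}r}$ together with
\[
g\sinh r=\cosh r-\frac{p-1}{N-1}\,\frac{\sinh r}{r}\ge\frac{N-p}{N-1}\,\cosh r\ge\frac{N-p}{N-1}\ge\frac{p-1}{N-1},
\]
where the first inequality is $\sinh r\le r\cosh r$ and the last is $N\ge2p-1$ (a consequence of $N\ge1+p(p-1)$ since $(p-1)(p-2)\ge0$); raising to the power $p-2\ge0$ reproduces $c_{2}$. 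For the last term I would bound $\coth r\ge1/r$ inside the bracket, which by the identity $(p-2)+\tfrac{p-1}{N-1}=\tfrac{N(p-2)+1}{N-1}$ leaves $A\,\tfrac{p-1}{p}\,\tfrac{N(p-2)+1}{N-1}\,\tfrac{g^{p-2}}{r^{2}}$, and then use $g^{\,p-2}\ge\left(\tfrac{p-1}{N-1}\right)^{p-2}r^{-(p-2)}$, i.e. $(N-1)r\,g\ge p-1$; the latter is equivalent to $(N-1)r\coth r\ge 2(p-1)$, which holds since $r\coth r\ge1$ and $N-1\ge p(p-1)\ge2(p-1)$. A direct check shows these two bounds reproduce exactly $c_{2}$ and $c_{1}$, and rearranging the resulting chain of inequalities yields \eqref{eqphardy2}.

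The main obstacle is the exact regrouping of $-\Delta_{p,\hn}\rho/\rho^{p-1}$ into the three clean blocks displayed above: everything downstream is elementary, but the cancellations that isolate the weight $\Lambda_{p}H_{p}=\Lambda_{p}g^{p-2}$ (rather than the $\Lambda_{p}g^{p}$ that appears first) and that assemble the remaining terms into two manifestly nonnegative pieces are delicate, and it is precisely there that the dimensional restriction $N\ge1+p(p-1)$ and the exponent $\tfrac{p-1}{p}$ in $\rho$ are forced. Once the identity is secured, the same restriction is exactly what validates the two elementary lower bounds on $g$, so that the inequality---and not merely its leading asymptotics at $0$ and $\infty$---holds globally.
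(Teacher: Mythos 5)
Your proposal is correct and takes essentially the same route as the paper: your $\rho(r)=(\sinh r)^{-\frac{N-1}{p}}r^{\frac{p-1}{p}}$ is exactly the paper's supersolution $\tilde g(r)=\left(\frac{r}{\sinh r}\right)^{\frac{N-1}{p}}r^{\frac{p-N}{p}}$, your three-block identity for $-\Delta_{p,\hn}\rho/\rho^{p-1}$ is an algebraically equivalent regrouping of the paper's radial computation, and your elementary bounds ($\coth r\ge 1/r$, $\sinh r\le r\cosh r$, $N\ge 2p-1$) yield precisely the same two constants. The only cosmetic difference is that you invoke the potential (Picone-type) inequality of \cite{Dambrosio} directly, whereas the paper runs the identical supersolution step through the Allegretto--Piepenbrink theorem \cite{PT1} and then removes the pole by a zero $p$-capacity argument.
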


\begin{rem}\label{c2}
When $p=2$, the statement of Theorem \ref{superphardy} includes that of Theorem \ref{mainphardy} providing a further remainder term.
Unfortunately, the weight $H_p$ is larger than one only for $r$ small, hence \eqref{eqphardy2} is not an improvement of the $p$-Poincar\'e inequality if $p\not=2$. Nevertheless, for functions having support outside large balls the inequality becomes very "close" to the Poincar\'e one, see Lemma \ref{asymptotic}.
\end{rem}

In Section \ref{figSection}, from Theorem \ref{superphardy}, we deduce
an inequality
involving the same weight of (\ref{eqphardy}) but holding on geodesic balls.

 \section{Related Hardy-Maz'ya-type Inequalities on Half-space}\label{MazSec}

 This section is devoted to the study of improved Hardy-Maz'ya-type inequalities on upper half space. There have been an extensive research on Hardy-Maz'ya inequality
 (see \cite{FMT, FTT,mancini,Ma}).  Our main goal here is to present some Hardy-Maz'ya inequalities strictly related to our Poincar\'e-Hardy inequalities on the hyperbolic space. We begin with the counterpart of Lemma \ref{bottom}:

  \begin{lem} \label{lemMaz}
 Let $p>1$, $N \geq 2$ and set $\Lambda_{p}$ as in \eqref{LAMBDAp}. Then for all $u \in C_{c}^{\infty}(\mathbb{R}^{N}_{+})$ there holds

 \begin{equation}\label{mazya-type}
\int_{\mathbb R^{+}} \int_{\mathbb{R}^{N-1}} \frac{|\nabla u|^p}{y^{N-p}} \ {\rm d}x \ {\rm d}y \geq  \Lambda_p \int_{\mathbb{R}^{+}} \int_{\mathbb{R}^{N-1}} \frac{|u|^p}{y^N} \ {\rm d}x \ {\rm d}y\,,
\end{equation}
where $\nabla u$ denotes the euclidean gradient. Moreover the constant $\Lambda_{p}$ appearing in \eqref{mazya-type} is sharp.
 \end{lem}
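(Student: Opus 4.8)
The plan is to observe that \eqref{mazya-type} is nothing but the $p$-Poincar\'e inequality \eqref{p-gap} written out in the upper half-space model of $\hn$, so that the whole statement reduces to Lemma \ref{bottom} via the coordinate dictionary already recorded in its proof. Recall from there that in the coordinates $(x,y)\in\mathbb{R}^{N-1}\times\mathbb{R}^+$ one has ${\rm d}v_{\hn}=y^{-N}\,{\rm d}x\,{\rm d}y$ and $\nabla_{\hn} u=y^2\nabla u$, hence $|\nabla_{\hn} u|=y|\nabla u|$, where $\nabla$ denotes the Euclidean gradient.

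First I would substitute these identities into the two terms of \eqref{p-gap}. This gives
\[
\int_{\hn}|\nabla_{\hn} u|^p\,{\rm d}v_{\hn}=\int_{\mathbb{R}^+}\int_{\mathbb{R}^{N-1}}\frac{|\nabla u|^p}{y^{N-p}}\,{\rm d}x\,{\rm d}y,\qquad \int_{\hn}|u|^p\,{\rm d}v_{\hn}=\int_{\mathbb{R}^+}\int_{\mathbb{R}^{N-1}}\frac{|u|^p}{y^N}\,{\rm d}x\,{\rm d}y,
\]
so that inequality \eqref{mazya-type} is literally \eqref{p-gap} read in these variables. The only point to check is that the classes of admissible functions match: since the hyperbolic metric is conformal to the Euclidean one on $\mathbb{R}^N_+$, the two metrics induce the same topology and the same smooth structure, whence $C_c^\infty(\mathbb{R}^N_+)$ coincides with $C_c^\infty(\hn)$ under the identification. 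Therefore \eqref{mazya-type} holds for every $u\in C_c^\infty(\mathbb{R}^N_+)$ as a direct consequence of the inequality established in Lemma \ref{bottom}.

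For the sharpness of $\Lambda_p$ I would again transport the corresponding statement. By the same change of variables the Rayleigh quotient in \eqref{pbottom} equals the ratio of the two integrals on the right- and left-hand sides of \eqref{mazya-type}, so $\Lambda_p$ is the infimum of that ratio as well and cannot be replaced by any larger constant. Concretely, one can simply reuse the explicit family $U_\varepsilon$ built in the proof of Lemma \ref{bottom}: rewritten in the variables $(x,y)$ it furnishes admissible competitors for \eqref{mazya-type} whose quotient tends to $\Lambda_p$ as $\varepsilon\to0$, which gives optimality directly and without invoking any abstract density argument.

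There is essentially no analytical obstacle here; the content of the lemma is the model identity itself, and the only thing one must be slightly careful about is the bookkeeping of the two measures and of the conformal factor, i.e. that no Jacobian is overlooked when passing between $\int_{\hn}\cdots\,{\rm d}v_{\hn}$ and $\int_{\mathbb{R}^N_+}\cdots\,{\rm d}x\,{\rm d}y$. Once the dictionary $|\nabla_{\hn} u|=y|\nabla u|$ and ${\rm d}v_{\hn}=y^{-N}\,{\rm d}x\,{\rm d}y$ is in hand, both the inequality and its sharpness are immediate, which is exactly why this lemma is a natural half-space counterpart of Lemma \ref{bottom}.
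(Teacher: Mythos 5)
Your proposal is correct and is essentially identical to the paper's proof, which likewise observes that in the half-space model $|\nabla_{\hn}u|=y|\nabla u|$ and ${\rm d}v_{\hn}=y^{-N}{\rm d}x\,{\rm d}y$, so that \eqref{mazya-type} is just \eqref{pbottom} rewritten and the lemma is a corollary of Lemma \ref{bottom}. The only slight inaccuracy is your claim that reusing $U_\varepsilon$ avoids any density argument: $U_\varepsilon$ is not compactly supported, so to conclude sharpness within $C_c^\infty(\mathbb{R}^N_+)$ you still need either the density of $C_c^\infty$ in $W^{1,p}(\hn)$ (as already built into Lemma \ref{bottom}) or a truncation of $U_\varepsilon$ --- a harmless point that does not affect the argument.
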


\begin{proof}
The proof of Lemma \ref{lemMaz} follows by noticing that in the upper half space model for $\hn$, see the proof of Lemma  \ref{bottom}, \eqref{pbottom} readily writes as the Hardy-Maz'ya-type inequality \eqref{mazya-type}. Hence, the statement of Lemma \ref{lemMaz} comes as a corollary of Lemma \ref{bottom}.
\end{proof}

Next we turn to the main result of this section. We improve \eqref{mazya-type} by providing a suitable remainder term.

\begin{thm}\label{Tid-remake}
Let $p >1$, $N \geq 2$ and set $\Lambda_{p}$ as in \eqref{LAMBDAp}. For all $u \in C_{c}^{\infty}(\mathbb{R}^{N}_{+})$ there holds
\begin{equation}\begin{aligned}\label{phardyTid}
\int_{\mathbb R^{+}} \int_{\mathbb{R}^{N-1}} \frac{|\nabla u|^p}{y^{N-p}} \ {\rm d}x \ {\rm d}y -  \Lambda_p \int_{\mathbb{R}^{+}} \int_{\mathbb{R}^{N-1}} \frac{|u|^p}{y^N} \ {\rm d}x \ {\rm d}y
 \geq \\
   \left( \frac{N-1}{p} \right)^{p-2}  C(N,p)  \int_{\mathbb{R}^{+}} \int_{\mathbb{R}^{N-1}} \frac{|u|^p}{y^{N-1}\sqrt{y^2+x_1^2}} \ {\rm d}x \ {\rm d}y.
\end{aligned}\end{equation}
where $C(N,p)$ is a positive constant as in (\ref{cnp}).
\end{thm}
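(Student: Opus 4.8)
The plan is to stay in the upper half-space model of $\hn$ used in Lemma~\ref{lemMaz} and to localise the improvement in a single angular direction. Since the remainder weight $y^{1-N}(y^2+x_1^2)^{-1/2}$ depends only on the pair $(x_1,y)$, I would pass to polar coordinates $x_1=s\cos\theta$, $y=s\sin\theta$ (with $s>0$, $\theta\in(0,\pi)$) in that plane, treating $x'=(x_2,\dots,x_{N-1})$ as spectators. A direct computation then shows that the gradient weight, the Poincar\'e weight and the remainder weight all become explicit functions of $(s,\theta,x')$ and, most importantly, that the target weight equals $\sin\theta$ times the Poincar\'e density $y^{-N}\,{\rm d}x\,{\rm d}y$. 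This recovers the identity $V=y/\sqrt{y^2+x_1^2}=\sin\theta$ recorded after Theorem~\ref{thphardyMaz} and makes precise the idea that the whole improvement is governed by the angular variable $\theta$.

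The engine of the proof is the potential (super-solution) inequality: for any $C^1$ vector field $\mathbf T$, integrating $\int(\operatorname{div}\mathbf T)\,|u|^p=-p\int|u|^{p-2}u\,\mathbf T\!\cdot\!\nabla u$ and applying Young's inequality in the form $p\,ab\le a^p+(p-1)b^{p'}$ gives
\begin{equation*}
\int_{\mathbb R^{N}_{+}}\frac{|\nabla u|^p}{y^{N-p}}\,{\rm d}x\ \ge\ \int_{\mathbb R^{N}_{+}}\Big[\operatorname{div}\mathbf T-(p-1)\,y^{(N-p)/(p-1)}|\mathbf T|^{p'}\Big]|u|^p\,{\rm d}x .
\end{equation*}
I would take $\mathbf T=\mathbf T_0+\mathbf T_1$: the field $\mathbf T_0$ attached to the weighted $p$-harmonic function $\rho=y^{(N-1)/(p-1)}$ of Lemma~\ref{bottom}, chosen so that the bracket equals exactly $\Lambda_p\,y^{-N}$ and thus reproduces the sharp Maz'ya term of \eqref{mazya-type}; plus an angular perturbation $\mathbf T_1$ acting only in the $(x_1,y)$-plane (i.e. in the $\theta$-direction), designed to create the weight $y^{1-N}(y^2+x_1^2)^{-1/2}$.

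It then remains to estimate the bracket from below. Using the $p$-convexity of $z\mapsto|z|^{p'}$ to control $|\mathbf T_0+\mathbf T_1|^{p'}$ around $\mathbf T_0$, the inequality reduces to a one-dimensional differential inequality of Riccati type for the angular profile of $\mathbf T_1$ on $(0,\pi)$, whose right-hand side carries the factor $\sin\theta$ (equivalently, after the substitution $\tau=\log\tan(\theta/2)$, an inequality on the whole line with $\sin\theta=\operatorname{sech}\tau$). Choosing an explicit profile and optimising over a scalar parameter via Young's inequality produces the constant $C(N,p)$. The four regimes in \eqref{cnp} come precisely from this optimisation: the sharp two-sided $p$-convexity estimates for $|z|^{p'}$ take different forms according as $p'\ge 2$ or $p'\le 2$ (that is, $p\le2$ or $p\ge2$), with a further split near $p=4/3$ where the degenerate case $1<p<2$ must be handled through a Cauchy--Schwarz step; the dimensional threshold $p>2(N-1)^2$ is where the two competing branches of the optimum cross, and one checks that the two corresponding formulas in \eqref{cnp} agree at $p=2(N-1)^2$.

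The step I expect to be the main obstacle is this last one: exhibiting a perturbation $\mathbf T_1$ that produces the exact weight $(y^2+x_1^2)^{-1/2}$ with a strictly positive constant --- equivalently, solving the associated one-dimensional Riccati inequality --- and then carrying out the $p$-dependent optimisation that yields the explicit $C(N,p)$ together with its four regimes. Two points must be watched throughout: first, that the perturbation $\mathbf T_1$ enters only at the level of the remainder and does not degrade the leading constant $\Lambda_p$, which by Lemma~\ref{lemMaz} cannot be improved; and second, that all boundary terms in the integrations by parts vanish, which holds since $u\in C_{c}^{\infty}(\mathbb R^{N}_{+})$ is supported away from $\{y=0\}$.
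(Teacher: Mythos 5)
Your setup is sound and essentially coincides with the paper's: the potential inequality you derive by integration by parts plus Young's inequality is exactly the mechanism behind Tidblom's lemma (Lemma \ref{Tid21}, specialized to the half-space in Lemma \ref{Tid21BIS}), your identification of the remainder weight as $w/y^N$ with $w=y/\sqrt{y^2+x_1^2}=\sin\theta$ is the one the paper exploits, and perturbing around the field attached to $\rho=y^{(N-1)/(p-1)}$ is the right idea. But the proposal stops exactly where the proof has to start: you never exhibit the perturbation $\mathbf{T}_1$, never solve the angular ``Riccati-type'' inequality, and never carry out the optimisation that produces $C(N,p)$ — indeed you flag this step yourself as the expected obstacle. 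It is the whole content of the theorem, and the single-field version of it faces a genuine difficulty: the convexity loss is of order $|\mathbf{T}_1|^{p'}$ (effectively $w^2$) while the divergence must produce a positive multiple of $w$, so near $\theta=0,\pi$ the profile must degenerate fast enough that the quadratic loss is dominated by a linearly vanishing gain, and no explicit profile obviously closes this with a strictly positive constant.

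The paper resolves this by a device absent from your plan: it uses \emph{two} vector fields and takes a linear combination of the resulting \emph{integral} inequalities, rather than forcing one pointwise differential inequality. With $\mathbf{F}_1=\left(0,\dots,0,\frac{aw}{y^{N-1}}\right)$, the bracket in \eqref{Tid1}, estimated through the elementary inequality \eqref{ni}, yields the desired weight $a\frac{p}{N-1}\frac{w}{y^N}$ \emph{minus} an error of order $\frac{w^2}{y^N}$; with $\mathbf{F}_2=c\left(\frac{x_1w^2}{y^N},0,\dots,0,\frac{w^2}{y^{N-1}}\right)$ one obtains a \emph{positive} multiple of $\frac{w^2}{y^N}$ (after controlling the $w^4$ terms, which is where $q_{p'/2}$ and the split between $p\le 2$ and $p>2$ enter). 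Multiplying the second inequality by a suitable constant and adding it to the first cancels the negative $w^2$ contribution and leaves a bound of the form $\Lambda_p\frac{p}{N-1}\mu(a)\int \frac{w}{y^N}|u|^p$, and $C(N,p)$ comes from maximising $\mu$ over the admissible parameters. The four regimes in \eqref{cnp} then fall out mechanically: the split at $p=4/3$ reflects $q_{p'/2}=p'/4$ versus $q_{p'/2}=1$, while for $p>2$ the threshold $p=2(N-1)^2$ is where the unconstrained maximiser $a_0=\frac{1}{N-1}\sqrt{p/2}$ of $\mu_2$ exits $[0,1]$, forcing the boundary value $a=1$ — not a crossing of two competing branches, as you conjectured. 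In short: right framework, but the decisive mechanism (two fields plus combination of inequalities, and the explicit optimisation) is missing, so the claimed constant — and hence the theorem — is not established by the proposal.
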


It is worth noting that Theorem \ref{thphardyMaz} turns out to be a consequence of the above theorem. We postpone the proofs of Theorem~\ref{Tid-remake} and, hence, of Theorem~\ref{thphardyMaz} to Section \ref{HipMaz}.

\section{ Proof of Proposition~\ref{weight1}}
We recall for the convenience of the reader the proof given in \cite{BMR}, only the asymptotics at infinity not being explicitly given there. The proof relies on the well known classical Hardy inequality with respect to the Green's function and exploiting its behavior on hyperbolic space.
More precisely, for $N \geq 2$ and $p >1$,
the following Hardy inequality holds (see \cite{Dambrosio}, \cite{BMR}):

\begin{equation}\label{ephardy}
\int_{\hn} |\nabla_{\hn} u |^{p} \ {\rm d}v_{\hn} \geq \left( \frac{p-1}{p} \right)^{p} \int_{\hn} \left| \frac{\nabla G_{p}}{G_{p}} \right|^{p} |u|^{p} \ {\rm d}v_{\hn},
\end{equation}
for $u \in C_{c}^{\infty}(\hn)$, where $ G_{p}$ is the Green's function of $-\Delta_{p,\hn}$ which, up to a positive multiplicative constant, is given by
\[
G_{p}(r) :=  \int_{r}^{\infty} (\sinh s)^{-\frac{N-1}{p-1}} \ \mbox{d}s.
\]
Indeed, if $p>N$, then $G_p\in W^{1,p}_{loc}(\hn)$ and hence \cite[Theorem 2.1]{Dambrosio} applies. For $1<p\le N$ the inequality (\ref{ephardy}) holds for functions
$u \in C_{c}^{\infty}(\hn \setminus \{ x_{0} \})$, and since $\{ x_{0} \}$
is a compact set of zero $p$-capacity, the claim follows from \cite[Corollary 2.3]{Dambrosio}.

The proof is then a calculus exercise involving the asymptotics of the function $G_{p}(r)$. Indeed, Eq. \eqref{ephardy} may be rewritten as
\[
\int_{\hn} |\nabla_{\hn} u |^{p} \ {\rm d}v_{\hn} - \Lambda_{p} \int_{\hn} |u|^p \ {\rm d}v_{\hn} \geq  \int_{\hn} W |u|^{p} \ {\rm d}v_{\hn},
\]
where $$W(r) :=  \left( \frac{p-1}{p} \right)^{p} \left| \frac{G_{p}'(r)}{G_{p}(r)} \right|^{p} - \Lambda_{p}\,,$$
with $\Lambda_{p}$ as in \eqref{LAMBDAp}.

First we claim that $W> 0.$ From the expression of the Green's function we have
\begin{align*}
G_{p}(r) & = \int_{r}^{\infty} (\sinh s)^{-\frac{N-1}{p-1}} \ \mbox{d}s = \int_{r}^{\infty} (\sinh s)^{-\frac{N-1}{p-1} -1} \sinh s \ \mbox{d}s \\
& < \int_{r}^{\infty} (\sinh s)^{-\frac{N-1}{p-1} -1} \cosh s \ \mbox{d}s = \int_{\sinh r}^{\infty} t^{- \frac{N-1}{p-1} - 1} \ \mbox{d}t \\
& = \frac{p-1}{N-1} (\sinh r)^{-\frac{N-1}{p-1}}.
\end{align*}
Moreover, we also have $G_{p}^{\prime}(r) = - (\sinh r)^{-\frac{N-1}{p-1}}.$ Therefore,

\[
\left| \frac{ G_{p}^{\prime}(r)}{G_{p}(r)} \right|^{p} > \left( \frac{N-1}{p-1} \right)^p,
\]
and hence this proves  $\left( \frac{p-1}{p} \right)^{p} \left| \frac{ G_{p}^{\prime}(r)}{G_{p}(r)} \right|^{p} > \Lambda_p.$

Let us turn to study the asymptotic behavior of $W$ near the origin. First consider the case when $N \geq p.$ Then, $G_{p}(r) \rightarrow \infty$ as
 $r\rightarrow 0$ and, using de L'H\^opital's rule, we obtain:
 $$
 \lim_{r \rightarrow 0} \frac{r\, G_{p}^{\prime}(r)}{G_{p}(r)}= \frac{p-N}{p-1}\quad \text{if }N>p\,
 $$
and
 $$
 \lim_{r \rightarrow 0} \frac{r\,\log r\, G_{p}^{\prime}(r)}{G_{p}(r)}= 1\quad \text{if }N=p\,.
 $$

 Whence, the stated asymptotics easily follows.

When $N < p,$ in the second term above one has $\int_{r}^{\infty} (\sinh s)^{-\frac{N-1}{p-1}} \ \mbox{d}s < \infty$ as $ r \rightarrow 0$. Hence,  \eqref{newton} follows immediately by exploiting
$\sinh r \sim  r$ as $r \rightarrow 0.$

\medskip
Finally, we study the asymptotics of $W$ near infinity. For this we note that
\begin{align*}
G_{p}(r) & = \int_{r}^{\infty} (\sinh s)^{-\frac{N-1}{p-1}} \ ds = \int_{\sinh r}^{\infty} t^{-\frac{N-1}{p-1} } (1 + t^2)^{-\frac{1}{2}} \ dt \\
& = \int_{\sinh r}^{\infty} t^{-\frac{N-1}{p-1} -1} \left[ 1 - \frac{1}{2t^2} + o \left( \frac{1}{t^3} \right) \right] \ dt, \  \quad  r \rightarrow \infty \\
& = \frac{p-1}{N-1} (\sinh r)^{-\frac{N-1}{p-1}} - \left(2\frac{N-1}{p-1}+4 \right)^{-1} (\sinh r)^{-\frac{N-1}{p-1} -2} + o \left( (\sinh r)^{-\frac{N-1}{p-1} -3}  \right) ,
\end{align*}
hence we have

\begin{align*}
\left| \frac{G_{p}^{\prime}(r)}{G_{p}(r)} \right|^{p} &=
\left| \frac{p-1}{N-1} - \left(2\frac{N-1}{p-1}+4 \right)^{-1} (\sinh r)^{ -2} + o \left( (\sinh r)^{ -3}  \right)  \right|^{-p}=\\
&= \left(\frac{N-1}{p-1} \right)^{p}\left( 1 +
\frac{p\frac{N-1}{p-1}}{2(\frac{N-1}{p-1}+2)} (\sinh r)^{-2} + o ((\sinh r)^{-3})\right).
\end{align*}
This completes the proof.

\section{Proof of Theorem~\ref{Tid-remake} and Theorem~\ref{thphardyMaz}}\label{HipMaz}

\noindent{\bf {Proof of Theorem~\ref{Tid-remake}}}

\medskip

The key ingredients in the proof are the following Lemma \ref{Tid0} from \cite{Tid} that we adapt to our situation with a suitable choice of the parameters, and the inequality \eqref{ni} which represents an improvement of the analogous inequalities presented in \cite{Tid}.
\begin{lem}\label{Tid21} \cite[Lemma 2.1]{Tid}
Let $\Omega$ be a convex domain in $\mathbb{R}^{N}$ and set $\delta(z):=$dist$(z,\partial \Omega)$ for any $z\in \Omega$. Let $d\in(-\infty,mp-1)$ where $m\in \mathbb{N}_+$ and let ${\textbf F}=(F_1, . . . , F_N)$ be a $C^1(\Omega)$ vector field in $\mathbb{R}^{N}$. Furthermore, let $w \in C^1(\Omega)$ be a nonnegative weight function and
$$h_{p,m,d}: = \left(\frac{mp-d-1}{p} \right)^p\,.$$
Then, the following inequality holds
\begin{equation}\begin{aligned}\label{Tid0}
\int_{\Omega}  \frac{|\nabla u|^p\, w}{\delta^{(m-1)p-d}} \ dz \geq  h_{p,m,d} \left( \int_{\Omega}  \frac{| u|^p\, w}{\delta^{mp-d}} - \frac{p|u|^p \Delta \delta\, w}{(mp-d-1)\delta^{mp-d-1}} \ dz \right) \\
+  h_{p,m,d}  \int_{\Omega} \left[   \frac{p\,\Div {\textbf F}}{mp-d-1} +  \frac{p-1}{\delta^{mp-d}} \left(1-|\nabla \delta-\delta^{mp-d-1}{\textbf F}|^{\frac{p}{p-1}}\right)\right]|u|^pw \,dz  \\
+\left(\frac{mp-d-1}{p} \right)^{p-1}  \int_{\Omega} \nabla w \cdot \left({\textbf F} -\frac{\nabla \delta}{\delta^{mp-d-1}} \right) |u|^p \,dz\,,
\end{aligned}\end{equation}
for all $u \in C_{c}^{\infty}(\Omega)$.
\end{lem}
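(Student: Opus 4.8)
The plan is to prove \eqref{Tid0} by the vector-field (divergence) method combined with a scaled Young inequality, using that on a convex domain the distance function satisfies $|\nabla\delta|=1$ a.e. and is concave. Write $\beta:=mp-d-1$, which is positive by the hypothesis $d<mp-1$, so that $h_{p,m,d}=(\beta/p)^p$. My candidate vector field is
\[
\mathbf{T}:=w\,\frac{\nabla\delta}{\delta^{\beta}}-w\,\mathbf{F}.
\]
First I would compute its divergence. Using $|\nabla\delta|^2=1$ a.e.\ one obtains
\[
\operatorname{div}\mathbf{T}=-\frac{\beta\,w}{\delta^{\beta+1}}+\frac{w\,\Delta\delta}{\delta^{\beta}}-w\,\operatorname{div}\mathbf{F}+\nabla w\cdot\Big(\frac{\nabla\delta}{\delta^{\beta}}-\mathbf{F}\Big),
\]
and since $\beta+1=mp-d$ the first summand is precisely the leading Hardy term. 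The second step is the integration by parts $\int_\Omega(\operatorname{div}\mathbf{T})|u|^p\,dz=-p\int_\Omega|u|^{p-2}u\,\mathbf{T}\cdot\nabla u\,dz$ (legitimate since $u\in C_c^\infty(\Omega)$ has support away from $\partial\Omega$, where $\mathbf{T}$ is singular), which after using $w\ge 0$ and writing $\mathbf{T}=w\,\mathbf{G}/\delta^\beta$ with $\mathbf{G}:=\nabla\delta-\delta^{\beta}\mathbf{F}$ yields
\[
-\int_\Omega(\operatorname{div}\mathbf{T})|u|^p\,dz\le p\int_\Omega w\,|u|^{p-1}\frac{|\mathbf{G}|}{\delta^{\beta}}\,|\nabla u|\,dz.
\]

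Next I would apply Young's inequality $p\,XY\le t^{p}X^{p}+(p-1)t^{-p'}Y^{p'}$ (valid for any $t>0$) to the right-hand side, with the factorization $X=w^{1/p}|\nabla u|\,\delta^{-((m-1)p-d)/p}$ and $Y=w^{1/p'}|u|^{p-1}|\mathbf{G}|\,\delta^{((m-1)p-d)/p-\beta}$. A short computation of the exponents (using $(p-1)p'=p$ and $mp-d=\beta+1$) shows $X^p=w|\nabla u|^p\delta^{-((m-1)p-d)}$ and $Y^{p'}=w|u|^p|\mathbf{G}|^{p'}\delta^{-(mp-d)}$, so that $|\mathbf{G}|^{p'}=|\nabla\delta-\delta^{\beta}\mathbf{F}|^{p/(p-1)}$ reproduces exactly the term in \eqref{Tid0}. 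The decisive point is the choice of scale: taking $t$ with $t^{-p}=(\beta/p)^{p-1}$ makes all coefficients fall into place at once, since then automatically $t^{-p}\beta=(\beta/p)^p\,p$ and $t^{-p-p'}=(\beta/p)^p$. Rearranging, dividing through by $t^p$ (i.e.\ multiplying by $t^{-p}=(\beta/p)^{p-1}$), substituting the formula for $\operatorname{div}\mathbf{T}$, and finally splitting the bare $\delta^{-(mp-d)}$ coefficient $p$ as $1+(p-1)$ to present the ``$(1-|\mathbf{G}|^{p'})$'' grouping, gives precisely the stated right-hand side of \eqref{Tid0}. The match of the $\Delta\delta$, $\operatorname{div}\mathbf{F}$ and $\nabla w$ terms (each with prefactor $t^{-p}=(\beta/p)^{p-1}$) is then immediate.

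The main obstacle is regularity. On a convex domain $\delta$ is only Lipschitz and concave, so $\nabla\delta$ is merely $L^\infty$ with $|\nabla\delta|=1$ a.e., and $\Delta\delta$ is a nonpositive Radon measure rather than a function; hence both the divergence computation and the integration by parts must be justified distributionally. I would handle this by replacing $\delta$ with a smooth concave approximation $\delta_\varepsilon$ (or by working on the open full-measure set where $\delta$ is $C^2$ and controlling the singular part through concavity), carrying out the argument there, and passing to the limit; the sign $\Delta\delta\le 0$ ensures the corresponding integral is well defined. Apart from this analytic care, the proof is essentially bookkeeping of the $\delta$-exponents, the only genuine structural input being the single scaling relation $t^{-p}=(\beta/p)^{p-1}$ that forces the sharp constant $h_{p,m,d}$.
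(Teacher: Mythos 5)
Your proof is correct: the divergence computation for $\mathbf{T}=w\,\nabla\delta/\delta^{\beta}-w\,\mathbf{F}$, the exponent bookkeeping in the Young factorization, and the single scaling choice $t^{-p}=(\beta/p)^{p-1}$ (which forces $t^{-p}\beta=p\,h_{p,m,d}$ and $t^{-p-p'}=h_{p,m,d}$) all check out and reproduce every term of \eqref{Tid0}, while your caveat about $\delta$ being only concave/Lipschitz on a general convex $\Omega$ is the right one (and is vacuous in the paper's application, where $\Omega=\mathbb{R}^{N}_{+}$ and $\delta=y$ is smooth with $\Delta\delta=0$). Note that the paper does not prove this lemma at all but imports it from \cite[Lemma 2.1]{Tid}, and the proof there is the same vector-field/integration-by-parts-plus-scaled-Young argument you give, so your approach coincides with that of the source.
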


We will apply Lemma \ref{Tid21} with $\Omega=\mathbb R_+^{N}$. Hence, $z=(x_1,...,x_{N-1},y)=(x,y)$ with $x\in \mathbb R^{N-1}$, $y\in \mathbb R^{+}$, and $\delta(z)=y$. Furthermore, we fix $w=1$, $m=2$ and $d=mp-N$ so that $d<mp-1$ for any $p\geq 1$ and $N >1$ and we obtain $h_{p,m,d}=\Lambda_p$. Then, \eqref{Tid0} reads as follows.

\begin{lem}\label{Tid21BIS}
Let $p > 1$, $N\geq 2$ and set $\Lambda_{p}$ as in \eqref{LAMBDAp}. For any $C^1(\mathbb R_+^{N})$ vector field ${\mathbf F}=(F_1,...,F_N)$, the following inequality holds
\begin{equation}\begin{aligned}\label{Tid1}
\int_{\mathbb R^{+}} \int_{\mathbb{R}^{N-1}} \frac{|\nabla u|^p}{y^{N-p}} \ {\rm d}x \ {\rm d}y -  \Lambda_p \int_{\mathbb{R}^{+}} \int_{\mathbb{R}^{N-1}} \frac{|u|^p}{y^N} \ {\rm d}x \ {\rm d}y
 \geq \\
  \Lambda_p  \int_{\mathbb{R}^{+}} \int_{\mathbb{R}^{N-1}} \left[ \frac{p\, \Div {\mathbf F}}{N-1}  +  \frac{p-1}{y^N}\left(1-|(0,...,0,1)-y^{N-1}{\mathbf F}|^{\frac{p}{p-1}}\right) \right]|u|^p \ {\rm d}x \ {\rm d}y\,,
\end{aligned}\end{equation}
for all $u \in C_{c}^{\infty}(\mathbb{R}^{N}_{+})$.

\end{lem}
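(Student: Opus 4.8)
The plan is to obtain Lemma \ref{Tid21BIS} as a pure specialization of the general Lemma \ref{Tid21}: the whole argument amounts to inserting the declared parameters into \eqref{Tid0} and checking that each term collapses onto the corresponding term of \eqref{Tid1}, while the vector field $\mathbf{F}$ is kept arbitrary throughout. First I would record the admissibility of the choice $d = mp - N = 2p - N$. The hypothesis $d < mp - 1$ reads $2p - N < 2p - 1$, i.e. $N > 1$, which holds since $N \geq 2$. With $m = 2$ this gives $mp - d - 1 = N - 1$, hence $h_{p,2,d} = \left(\frac{mp-d-1}{p}\right)^p = \left(\frac{N-1}{p}\right)^p = \Lambda_p$, matching the constant multiplying every term on the right-hand side of \eqref{Tid1}.

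Next I would compute the three exponents of $\delta = y$ occurring in \eqref{Tid0}. One has $(m-1)p - d = p - (2p - N) = N - p$, so the gradient term on the left becomes $\int \frac{|\nabla u|^p}{y^{N-p}}$; likewise $mp - d = N$ turns the leading term on the right into $\Lambda_p \int \frac{|u|^p}{y^N}$, while $mp - d - 1 = N - 1$ produces both the weight $y^{N-1}$ inside the modulus of the $\mathbf{F}$-term and the factor $N-1$ in the denominator of $\frac{p\,\Div \mathbf{F}}{mp-d-1}$. These are exactly the powers of $y$ appearing in \eqref{Tid1}.

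The remaining reductions come from the flat geometry of the half-space. Since $\delta(z) = y$ is affine, $\Delta \delta \equiv 0$, so the correction $\frac{p|u|^p \Delta \delta\, w}{(mp-d-1)\delta^{mp-d-1}}$ in the first line of \eqref{Tid0} vanishes identically, leaving only $\Lambda_p \int \frac{|u|^p}{y^N}$ there. Moreover $\nabla \delta = (0,\dots,0,1)$, which is precisely the vector subtracted from $y^{N-1}\mathbf{F}$ inside the modulus of the $\mathbf{F}$-dependent integrand. Finally, the choice $w \equiv 1$ forces $\nabla w = 0$, so the entire last line of \eqref{Tid0}, carrying the factor $\left(\frac{mp-d-1}{p}\right)^{p-1}\nabla w \cdot (\cdots)$, drops out. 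Assembling the surviving terms reproduces \eqref{Tid1} verbatim.

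There is no genuine obstacle: all analytic content resides in Lemma \ref{Tid21}, and the present statement is a bookkeeping specialization. The only points demanding care are the consistent tracking of the exponents $(m-1)p - d$, $mp - d$ and $mp - d - 1$, together with the verification that the admissibility constraint $d < mp - 1$ is respected by $d = 2p - N$; once these are in place, the identification of every weight and constant is forced.
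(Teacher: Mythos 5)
Your proposal is correct and coincides with the paper's own argument: the paper derives Lemma \ref{Tid21BIS} precisely by specializing Lemma \ref{Tid21} to $\Omega=\mathbb{R}^{N}_{+}$, $\delta(z)=y$, $w\equiv 1$, $m=2$, $d=mp-N$, noting that $d<mp-1$ holds for $N\geq 2$ and that $h_{p,m,d}=\Lambda_p$. Your explicit verification of the exponents $(m-1)p-d=N-p$, $mp-d=N$, $mp-d-1=N-1$, together with the vanishing of the $\Delta\delta$ term (since $\delta$ is affine) and of the $\nabla w$ term (since $w\equiv 1$), simply spells out the bookkeeping the paper leaves implicit.
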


\begin{lem} Let $b>0$ and $s\in [0,1]$ then
  \begin{equation}  \label{ni}
  1-(1-s)^b\ge b s - q_b(b-1)s^2
\end{equation}
where
\begin{equation}
  q_b:=
  \begin{cases}
    1  & if\ 1\le b \le 2; \\
    b/2 & if\   0 <b<1\ or \ 2<b.
  \end{cases}
\end{equation}
\end{lem}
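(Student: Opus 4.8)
The plan is to prove the elementary inequality
\begin{equation*}
1-(1-s)^b \ge b\,s - q_b(b-1)\,s^2, \qquad s\in[0,1],\ b>0,
\end{equation*}
by reducing it to a statement about a single-variable function and analysing its sign. First I would set $f(s):=1-(1-s)^b - b\,s + q_b(b-1)\,s^2$ and aim to show $f(s)\ge 0$ on $[0,1]$. Note $f(0)=0$, so it suffices to control the derivative. Computing $f'(s)=b(1-s)^{b-1} - b + 2q_b(b-1)\,s$ and again $f'(0)=0$, so the natural route is to look at $f''(s)=-b(b-1)(1-s)^{b-2} + 2q_b(b-1) = (b-1)\bigl(2q_b - b(1-s)^{b-2}\bigr)$.

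The heart of the argument is then a sign analysis of $f''$ that splits along the definition of $q_b$, and this is the step I expect to be the main obstacle because the monotonicity of $(1-s)^{b-2}$ reverses depending on whether $b\gtrless 2$. In the regime $1\le b\le 2$ one has $q_b=1$, the factor $(b-1)\ge 0$, and $(1-s)^{b-2}$ ranges over $[1,\infty)$ as $s$ runs in $[0,1)$; here I would verify that $f''$ is nonnegative near $s=0$ and then argue via the convexity/concavity structure to conclude $f\ge 0$, possibly checking the boundary value $f(1)=1-b+q_b(b-1)=1-b+(b-1)=0$, which shows the estimate is in fact sharp at both endpoints when $q_b=1$. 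In the regimes $0<b<1$ and $b>2$ one takes $q_b=b/2$, so $2q_b=b$ and $f''(s)=b(b-1)\bigl(1-(1-s)^{b-2}\bigr)$; the key observation is that $b-1$ and $1-(1-s)^{b-2}$ have a controlled joint sign (for $b>2$ both factors combine to make $f''\ge0$, while for $0<b<1$ a parallel check applies), which gives convexity of $f$ and, together with $f(0)=f'(0)=0$, the desired nonnegativity.

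For the cases where direct convexity does not immediately close the argument, I would fall back on monotonicity of $f'$: since $f'(0)=0$, establishing that $f'$ does not change sign in an unfavourable way (or locating its unique interior critical point and checking $f$ there) yields $f\ge0$. Throughout, the choice $q_b=b/2$ is engineered precisely so that the $(1-s)^{b-2}$ term in $f''$ has coefficient matching $2q_b=b$, collapsing the second derivative to the clean form above; recognising that this is the point of the case split is what makes the whole computation routine rather than delicate. The only genuine care needed is bookkeeping the inequalities $b-1\ge0$ versus $b-1<0$ and the corresponding behaviour of the power $(1-s)^{b-2}$, after which each case reduces to a short sign check and an application of the fundamental theorem of calculus starting from $f(0)=0$.
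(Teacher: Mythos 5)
Your proposal is correct and can be completed exactly as you sketch, but it is not quite the paper's route, so a comparison is worthwhile. The paper splits the cases the same way yet uses two different tools: for $b\ge 2$ and $0<b\le 1$ it invokes Taylor's formula with Lagrange remainder, $(1-s)^b=1-bs+\tfrac b2(b-1)s^2+R(s)$ with $R(s)=-\tfrac16\,b(b-1)(b-2)\,s^3(1-t)^{b-3}$, and checks $R(s)\le 0$ in those regimes; for $1<b<2$ it studies $g(s):=(1-s)^b-1+bs-(b-1)s^2$ (your $-f$) and shows $\max_{[0,1]}g=0$ from $g'''>0$ (so $g''$ changes sign once), $g'(0)=0$, $g'(1)>0$ and $g(0)=g(1)=0$. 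In the delicate case $1\le b\le 2$ your argument coincides with the paper's up to the sign convention: $f''(s)=(b-1)\bigl(2-b(1-s)^{b-2}\bigr)$ is decreasing, hence $f$ is convex-then-concave, and the boundary data $f(0)=f'(0)=0$, $f(1)=0$ force $f\ge 0$ (on the convex piece $f'\ge f'(0)=0$, on the concave piece $f$ lies above the chord joining $(s_0,f(s_0))$ and $(1,0)$); when writing this up, do state explicitly that $f''$ changes sign at most once, which is what the paper extracts from $g'''>0$. Where you genuinely diverge is in the regimes $0<b<1$ and $b>2$: your observation that the choice $2q_b=b$ collapses the second derivative to $f''(s)=b(b-1)\bigl(1-(1-s)^{b-2}\bigr)$, whose two factors change sign together, gives global convexity and finishes the proof from $f(0)=f'(0)=0$ alone. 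This buys a single unified convexity argument for all cases and sidesteps Taylor's theorem with remainder entirely — including the mild technicality that the paper's remainder involves $(1-t)^{b-3}$, which is unbounded as $s\to 1^-$ when $b<3$ — at the cost of nothing; the paper's Taylor argument, in exchange, dispatches its two regimes in one line once the remainder's sign is read off.
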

\begin{proof} Taylor expansion of $(1-s)^b$ around 0 gives
$(1-s)^b=1-bs+\frac b2 (b-1)s^2 +R(s)$ where the reminder term $R(s)$ is given by
$R(s)=-s^3 b (b-1)(b-2) (1-t)^{b-3}/6$ with a suitable $t\in[0,s]$.
For $s\in [0,1]$ and $b\ge 2$ or $0<b\le 1$, $R(s)\le 0$ and the claim follows.

For the case $1<b<2$ the claim will follow by proving that the function
$g(s):=(1-s)^b-1+bs-(b-1)s^2$ is nonpositive on $[0,1]$.
To this end since $g'''>0$ one deduces that $g''$ is negative on an interval
$]0,s_0[$ and positive on $]s_0,1[$, which in turn, with the fact that $g'(0)=0$ and $g'(1)>0$,  implies that $g'$ has only a critical point on $]0,1[$.
Since $g(0)=g(1)=0$ and $g'(1)>0$ we obtain that the maximum of $g$ is 0.
\par

\end{proof}

For sake of brevity we introduce the following notation
 $$I(u):=  \int_{\mathbb{R}^{+}} \int_{\mathbb{R}^{N-1}}\frac{|\nabla u|^p}{y^{N-p}}\ {\rm d}x \ {\rm d}y-\Lambda_p \int_{\mathbb{R}^{+}} \int_{\mathbb{R}^{N-1}}\frac{|u|^p}{y^{N}}\ {\rm d}x \ {\rm d}y,$$
and
$$w:= \frac{y}{\sqrt{y^2+x_1^2}}.$$

\ Next, in the spirit of \cite[Theorem 4.1]{Tid}, for any $0\leq a \leq 1$ we write \eqref{Tid1} with
${\mathbf F}_1:= \left(0,...,\frac{aw}{y^{N-1}} \right)$.
Since $0\le w \le 1$ we get
\begin{equation}
  \label{eq:div1}
    \Div {\mathbf F}_1\ge (2-N)a\frac{w}{y^N}-a \frac{w^2}{y^N},
\end{equation}
and, by using (\ref{ni}) with $b=p'$ and the fact that $0\le aw\le 1$, we have
\begin{equation}
  \label{disf1}
  1-|(0,\dots,1)-y^{N-1}{\mathbf F}_1|^{p'}=1-(1-aw)^{p'}\ge p' aw - q_{p'}(p'-1)a^2w^2.
\end{equation}
By using (\ref{eq:div1}) and ({\ref{disf1}) in (5.2), the square bracket in right hand side can be estimated as
  \begin{equation}\begin{aligned}
    \label{eq:sq1}
   & \left[ \frac{p\, \Div {\mathbf F}_1}{N-1}  +  \frac{p-1}{y^N}\left(1-|(0,...,0,1)-y^{N-1}{\mathbf F}_1|^{\frac{p}{p-1}}\right) \right]\\
   &\ge a\frac{p}{N-1}\frac{w}{y^N}-a(\frac{p}{N-1}+q_{p'}a)\frac{w^2}{y^N} =:S_1
  \end{aligned}\end{equation}
Therefore, from \eqref{Tid21BIS}  we obtain
\begin{equation}
  \label{I1}
  I(u)\ge \Lambda_p\int_{\mathbb R^{+}} \int_{\mathbb{R}^{N-1}} S_1 |u|^p\ {\rm d}x \ {\rm d}y
\end{equation}
for all $u \in C_{c}^{\infty}(\mathbb{R}^{N}_{+})$.

Similarly, for any $0\leq c \leq 1$, choosing 
${\mathbf F}_2=c \left(\frac{x_1 w^2}{y^N},0,...,0,\frac{ yw^2}{y^N} \right)$,
by an explicit computation we obtain
\begin{equation}
  \label{eq:div2}
    \Div {\mathbf F}_2 = c(2-N)\frac{w^2}{y^N}
\end{equation}
and
\begin{equation}
  |(0,\dots,1)-y^{N-1}{\mathbf F}_2|^2=1-c(2-c) w^2.
\end{equation}
Evaluating the square bracket in r.h.s. of (5.2),
by using (\ref{ni}) with $b=p'/2$ and the fact $0\le c(2-c){w^2}\le 1$, we have
\begin{equation}\begin{aligned}
    \label{eq:sq2}
    \left[ \frac{p\, \Div {\mathbf F}_2}{N-1}  +  \frac{p-1}{y^N}\left(1-|(0,...,0,1)-y^{N-1}{\mathbf F}_2|^{\frac{p}{p-1}}\right) \right]\\
= \frac{p}{N-1}c(2-N)\frac{w^2}{y^N}+
    \frac{p-1}{y^N}\left(1-\left(1-c(2-c){w^2}\right)^{p'/2}\right)\\
    \ge \frac{p}{N-1} c (1-c \frac{N-1}{2}) \frac{w^2}{y^N}-(p-1)c^2(2-c)^2
      q_{p'/2}(\frac{p'}{2}-1)\frac{w^4}{y^N}
=:S_2\end{aligned}
  \end{equation}
>From Lemma 5.2 we deduce
\begin{equation}
  \label{I2}
  I(u)\ge \Lambda_p\int_{\mathbb R^{+}} \int_{\mathbb{R}^{N-1}} S_2 |u|^p\ {\rm d}x \ {\rm d}y
\end{equation}

\noindent {\bf Case $1<p\le 2$.} In this case since $0\le w\le 1$ and
$p'/2-1=\frac{(2-p)}{2(p-1)}\ge 0$ we have
$$ S_2\ge \frac{w^2}{y^N} \frac{p}{N-1} f(c),$$
where
$$ f(c):=c\left(1-c \frac{N-1}{2}\right) -c^2(2-c)^2  q_{p'/2}\frac{(2-p)(N-1)}{2p}.$$

Set $M:=\max\{f(c), c\in [0,1]\}$. Since $f(0)=0$ and $f'(0)=1>0$ we have that $M>0$. Hence we have
$$  S_2\ge M\frac{p}{N-1} \frac{w^2}{y^N},$$
which in turns yields
\begin{equation}
 \label{I22}
  I(u)\ge \Lambda_p \frac{p}{N-1} M\int_{\mathbb R^{+}} \int_{\mathbb{R}^{N-1}}  \frac{w^2}{y^N} |u|^p\ {\rm d}x \ {\rm d}y.
\end{equation}

For $1<p\le 2$, since $q_{p'}=\frac{p}{2(p-1)}$, (\ref{I1}) reads as
\begin{align} \label{I12}
   I(u)& \ge \Lambda_p\frac{p}{N-1} a \int_{\mathbb R^{+}} \int_{\mathbb{R}^{N-1}} \frac{w}{y^N} |u|^p\ {\rm d}x \ {\rm d}y \notag \\
    & -\Lambda_p\frac{p}{N-1}
     a\left(1+ \frac{N-1}{2(p-1)}a  \right)  \int_{\mathbb R^{+}} \int_{\mathbb{R}^{N-1}} \frac{w^2}{y^N} |u|^p\ {\rm d}x \ {\rm d}y.
\end{align}
Multiplying (\ref{I22}) by $\frac aM\left(1+ \frac{N-1}{2(p-1)}a  \right)$ and summing up to  (\ref{I12}) we have
\begin{equation} \label{eq:1bis}
   I(u)\ge \Lambda_p\frac{p}{N-1} {\mu_1(a)} \int_{\mathbb R^{+}} \int_{\mathbb{R}^{N-1}} \frac{w}{y^N} |u|^p\ {\rm d}x \ {\rm d}y,
\end{equation}
where
  $$\mu_1(a):=\frac{a}{1+\frac aM \left(1+ \frac{N-1}{2(p-1)} a\right)}. $$

 Setting $C(N,p):=\frac{N-1}{p}\max\{\mu_1(a), a\in[0,1]\}$ we get the claim.

Now we proceed to obtain an explicit estimate on $C(N,p)$.
To this end we first look for some bounds on $M=\max\{f(c), c\in [0,1]\}$.
Since  $c\ge0$ and $(2-p)\ge0$
from the chain of inequalities
$$f(c)\le  c\left(1-c \frac{N-1}{2}\right)\le \frac{1}{2(N-1)},$$
we deduce
\begin{equation}  \label{M+}
   M \le\frac 1{2}.
\end{equation}

Next step is to estimate the maximum of $\mu_1$. The function $\mu_1(a)$
for $a\ge 0$ attains its maximum at $a_0:=\sqrt{\frac{2(p-1)}{N-1} M}$.
>From the bound $M\le 1/2$, we immediately deduce that $0<a_0\le1$,
and hence
$$C(N,p)=\frac{N-1}{p}\mu_1(a_0)=\frac{N-1}{p} \frac{M}{1+
\sqrt{\frac{2(N-1)M}{p-1}}}=:\gamma(M).
$$

Since $\gamma$ is increasing, a bound from below on $M$ yields a bound from below on $C(N,p)$.
Set $\beta:=N-1$ and $\delta:=q_{p'/2}\frac{2-p}{p}$.
For $0\le c\le 1$, $f(c)$ can be estimated as
$$ f(c)=c\left(1-c \frac{\beta}{2}(1+4\delta) +2\beta\delta c^2(1-\frac14 c)\right)\ge
c\left(1-c \frac{\beta}{2}(1+4\delta)\right).$$
That is, by choosing $c_0:=\frac{1}{\beta(1+4\delta)}$, we have
$$M\ge f(c_0)=\frac{1}{2\beta(1+4\delta)}, $$
and hence
\begin{equation}
  \label{eq:cnp-}
  C(N,p)=\gamma(M)\ge\gamma\left(\frac{1}{2\beta(1+4\delta)}\right)=
  \frac{1}{2p(1+4\delta)}\ \frac{1}{1+\left((p-1)(1+4\delta)\right)^{-1/2}}.
\end{equation}

Now,  taking into account
that
for $1<p\le 4/3$ one has $q_{p'/2}=\frac{p'}{4}$, while for
$4/3<p\le 2$ one gets $q_{p'/2}=1$, plugging $\delta=\frac{2-p}{p}q_{p'/2}$
in (\ref{eq:cnp-}), we obtain the estimates.

\bigskip

\noindent {\bf Case $p > 2$.}
In this case we have for any $c\in [0,1]$
\begin{eqnarray} \label{sq22}
  S_2&\ge& \frac{p}{N-1} c (1-c \frac{N-1}{2}) \frac{w^2}{y^N}-(p-1)c^2(2-c)^2
      q_{p'/2}\frac{2-p}{2}\frac{w^4}{y^N}\\
      &\ge&  \frac{p}{N-1} c (1-c \frac{N-1}{2}) \frac{w^2}{y^N}.
\end{eqnarray}
Choosing $c=1/(N-1)$ we obtain
\begin{equation}
      S_2 \ge  \frac{p}{N-1} \frac{1}{2(N-1)} \frac{w^2}{y^N},
\end{equation}
  and hence we have
\begin{equation} \label{I21}
   I(u)\ge \Lambda_p\frac{p}{N-1} \frac{1}{2(N-1)}  \int_{\mathbb R^{+}} \int_{\mathbb{R}^{N-1}} \frac{w^2}{y^N} |u|^p\ {\rm d}x \ {\rm d}y
\end{equation}
Since $1<p'\le 2$ we have that $q_{p'}=1$ and  (\ref{I1}) reads as
\begin{equation} \label{I11}\begin{aligned}
   I(u)&\ge \Lambda_p\frac{p}{N-1} a \int_{\mathbb R^{+}} \int_{\mathbb{R}^{N-1}} \frac{w}{y^N} |u|^p\ {\rm d}x \ {\rm d}y\\ &-  \Lambda_p\frac{p}{N-1}
     a\left(1+ \frac{N-1}{p}a  \right)  \int_{\mathbb R^{+}} \int_{\mathbb{R}^{N-1}} \frac{w^2}{y^N} |u|^p\ {\rm d}x \ {\rm d}y
\end{aligned}\end{equation}
Multiplying (\ref{I21}) by $2(N-1) a \left(1+ \frac{N-1}{p}a  \right)$ and using (\ref{I11}) we have
\begin{equation} \label{eq:1}
   I(u)\ge \Lambda_p\frac{p}{N-1} {\mu_2(a)} \int_{\mathbb R^{+}} \int_{\mathbb{R}^{N-1}} \frac{w}{y^N} |u|^p\ {\rm d}x \ {\rm d}y
\end{equation}
where
  $$\mu_2(a):=\frac{a}{1+2(N-1) \,a\left(1+ \frac{N-1}{p} a\right)}. $$

 Setting $C(N,p):=\frac{N-1}{p}\max\{\mu_2(a), a\in[0,1]\}$ we get the claim.

Now we proceed to compute $C(N,p)$.
The maximum of $\mu_2$ is achieved at $a_0:=\frac1{N-1}\sqrt{\frac p2}$ if $a_0\le 1$,
at 1 else. That is,
\begin{itemize}
  \item[-] if $2<p\le 2 (N-1)^2$ we have
    $C(N,p)=\frac{N-1}{p} \mu_2(a_0)= \left(\sqrt 2 (\sqrt2 p+ 2\sqrt p)\right)^{-1}$;
  \item[-] if $p> 2 (N-1)^2$ we have
    $C(N,p)=\frac{N-1}{p} \mu_2(1)= \frac{N-1}{p}  \left( 1+2(N-1)+2\frac{(N-1)^2}{p}\right)^{-1}$.
\end{itemize}
This concludes the proof of Theorem~\ref{Tid-remake}.

\begin{rem}\label{rem2} Let $1<p<2$. Here, we compute $C(2,p)$, that is when $N=2$.
  In this case, with the same notation used in the proof of  Theorem~\ref{Tid-remake}, the function $f$ reads as
$$ f(c)=c\left(1- \frac{1}{2} c -\frac 12 c(2-c)^2\delta  \right).$$

Consider first the case $4/3\le p<2$. In this case $\delta\in ]0,1/2]$ and the only critical point of $f$ in $ [0,1]$ is at $c=1$, therefore $f$ attains its maximum at 1,
that is $M=f(1)=(1-\delta)/2<1/2$. Therefore, by definition of $C(2,p)$ we have
$$C(2,p)=\frac 1p\, \frac{(1-\delta)/2}{1+\sqrt{\frac{1-\delta}{4(p-1)}}}
  =\frac{1}{p'}\, \frac{\sqrt 2}{\sqrt 2 p+ \sqrt p}.
$$

Next we consider the case $1<p < 4/3$. Now we have $\delta\in]1/2,+\infty[$ and
the function $f$ has in $[0,1]$ two distinct critical value
$c_0= 1-\sqrt{1-\frac1{2\delta}}$ and $c_1=1$.
Since $f''(1)=2\delta -1 >0$, the maximum is attained at $c_0$,
that is $M=f(c_0)=\frac1{8\delta} (<1/4)$. Therefore
$$C(2,p)=\frac 1p\, \frac{(1/8\delta)}{1+\sqrt{\frac{1/8\delta}{2(p-1)}}}
  =\frac1{p'}\, \frac 1{2(2-p)+\sqrt{2-p}}.$$
\end{rem}

\par \bigskip\par

\noindent\textbf{Proof of Theorem \ref{thphardyMaz}}
\medskip

Letting $V(x_1,...,x_{N-1},y):=\frac{y}{\sqrt{y^2+x_1^2}}$, the proof of \eqref{phardyMaz} follows at once from \eqref{phardyTid} by exploiting the half-space model for $\hn$ as explained in the proof of Lemma \ref{bottom}. Next, for any $\alpha\in (0,1]$, set $U_{\alpha}:=\{(x, y) \in \mathbb{R}^{N}_{+} : x_1=ky\text{ with } k^2=(1-\alpha^2)/\alpha^2 \}$. Clearly, $V\vert_{U_{\alpha}}\equiv\alpha$ and $V\vert_{U_{\alpha}} \rightarrow \alpha$ as $y\rightarrow +\infty$.  Set $r:=\varrho((x,y),(0,1))$. Since $\cosh(r(x,y))= \left( 1 + \frac{(y - 1)^2 + |x|^2}{2 y} \right)$, we get that $r(x,y)\rightarrow +\infty$ as $y\rightarrow +\infty$ and the corresponding claim of Theorem \ref{thphardyMaz} follows.\par
On the other hand, for any $\beta>0$, take $W_{\beta}:=\{(x_1,0,...,0,\beta) \in \mathbb{R}^{N}_{+} \}$. Then, for any $\beta>0$, one has $V\vert_{W_{\beta}}\rightarrow 0$ as $x_1\rightarrow +\infty$. Furthermore, $r\vert_{W_{\beta}}\rightarrow +\infty$ if and only if $x_1 \rightarrow +\infty$ and $V\vert_{W_{\beta}}\sim \sqrt{\frac{\beta}{2}}\,e^{-r/2}$ as $r\rightarrow +\infty$.

\section{Proof of Theorem~\ref{mainphardy} and Corollary \ref{upl}}

Before proving Theorem \ref{mainphardy}, we recall some known results related to the symmetrization on the hyperbolic space. For any $\Omega \subset \hn$ and $x_0\in \hn$ fixed, denote with $\Omega^*$ the geodesic ball $B(x_0,r)$ having the same measure of $\Omega$. For $u \in C_{c}^{\infty}(\Omega),$ the hyperbolic symmetrization of $u$ is the unique nonnegative and decreasing function $u^*$ defined in $\Omega^*$ such that the level sets $\{ x\in \Omega^*:u^*(x)>t\}$ are concentric balls having the same measure of the level sets $\{ x\in \Omega:|u(x)|>t\}$.   See \cite{ALB} form more details.

\begin{lem}\label{polya-szego}
Let $p \geq 1$ and $ N \geq 2.$ For every $u,v \in C_{c}^{\infty}(\hn),$ there holds

\[
\int_{\hn} |\nabla_{\hn} u|^{p} \ {\rm d}v_{\hn} \geq \int_{\hn} |\nabla_{\hn} u^{*}|^{p} \ {\rm d}v_{\hn},
\]
\[
\int_{\hn} |u|^{p} \ {\rm d}v_{\hn} = \int_{\hn}  |u^{*}|^{p} \ {\rm d}v_{\hn},
\]
and
\[
\int_{\hn} |uv| \ {\rm d}v_{\hn} \leq \int_{\hn} u^{*} v^{*} \ {\rm d}v_{\hn},
\]
where $*$ denotes the hyperbolic symmetrization.
\end{lem}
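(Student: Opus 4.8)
The plan is to handle the three displayed statements in increasing order of difficulty, using throughout the distribution function $\mu(t):=v_{\hn}(\{|u|>t\})$ and the defining property of the hyperbolic symmetrization, namely that $u^*$ and $|u|$ are equimeasurable, $v_{\hn}(\{u^*>t\})=\mu(t)$ for all $t>0$. The second statement (conservation of the $L^p$ norm) is then immediate from the layer-cake representation
\[
\int_{\hn}|u|^p\,{\rm d}v_{\hn}=\int_0^\infty p\,t^{p-1}\mu(t)\,dt=\int_{\hn}|u^*|^p\,{\rm d}v_{\hn},
\]
since equimeasurability makes the two integrands coincide for every $t$.

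For the Hardy--Littlewood inequality (third display) I would again pass to level sets, writing both $\int|uv|$ and $\int u^*v^*$ through the two-parameter layer-cake formula, which reduces them to integrals over $(s,t)\in(0,\infty)^2$ of $v_{\hn}(\{|u|>s\}\cap\{|v|>t\})$ and $v_{\hn}(\{u^*>s\}\cap\{v^*>t\})$ respectively. The decisive point is that the super-level sets of $u^*$ and $v^*$ are all geodesic balls centred at the same pole $x_0$, hence nested, so that
\[
v_{\hn}(\{u^*>s\}\cap\{v^*>t\})=\min\big(v_{\hn}(\{u^*>s\}),\,v_{\hn}(\{v^*>t\})\big),
\]
while for the original functions only the general bound $v_{\hn}(\{|u|>s\}\cap\{|v|>t\})\le\min(\mu(s),\nu(t))$ holds, $\nu$ being the distribution function of $|v|$. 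Combining this with equimeasurability and integrating in $(s,t)$ yields the claim.

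The substantial statement is the P\'olya--Szeg\H{o} inequality (first display), whose engine is the coarea formula coupled with the isoperimetric inequality on $\hn$. First I would use the coarea formula to write $\int_{\hn}|\nabla_{\hn} u|^p\,{\rm d}v_{\hn}=\int_0^\infty\big(\int_{\{|u|=t\}}|\nabla_{\hn} u|^{p-1}\,d\sigma\big)\,dt$ together with $-\mu'(t)=\int_{\{|u|=t\}}|\nabla_{\hn} u|^{-1}\,d\sigma$. Applying H\"older's inequality on the level surface $\{|u|=t\}$ to the splitting $1=|\nabla_{\hn}u|^{(p-1)/p}\cdot|\nabla_{\hn}u|^{-(p-1)/p}$ gives
\[
P(t)^p\le\Big(\int_{\{|u|=t\}}|\nabla_{\hn}u|^{p-1}\,d\sigma\Big)\,(-\mu'(t))^{p-1},
\]
where $P(t)$ denotes the perimeter of $\{|u|>t\}$, so the inner integral is bounded below by $P(t)^p/(-\mu'(t))^{p-1}$. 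I would then invoke the isoperimetric inequality on $\hn$ (geodesic balls minimize perimeter at fixed volume) to replace $P(t)$ by $P^*(t)$, the perimeter of the ball $\{u^*>t\}$ of volume $\mu(t)$. Finally, for the radial decreasing profile $u^*$ every inequality above is saturated: $|\nabla_{\hn}u^*|$ is constant on each sphere $\{u^*=t\}$, so H\"older is an equality and the level sets are exactly balls, whence $\int_{\hn}|\nabla_{\hn}u^*|^p\,{\rm d}v_{\hn}=\int_0^\infty P^*(t)^p/(-\mu'(t))^{p-1}\,dt$. Integrating the resulting chain of inequalities in $t$ closes the argument.

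The main obstacle is not the geometric core but the rigorous justification of the coarea manipulations: one must deal with the critical values of $u$ (where $\nabla_{\hn}u=0$ and the level sets fail to be smooth), establish the absolute continuity and a.e.\ differentiability of $\mu$, and confine integration to the regular range via Sard's theorem. The isoperimetric inequality on $\hn$ is classical and can simply be quoted, as can the full symmetrization framework from \cite{ALB}; I would therefore present only the coarea--H\"older--isoperimetric skeleton above and defer the measure-theoretic regularity points to that reference.
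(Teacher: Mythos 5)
Your proposal is correct, but it is worth pointing out that the paper does not prove this lemma at all: it records the three statements as known facts about hyperbolic (Schwarz-type) symmetrization and simply cites Baernstein's survey \cite{ALB}. What you have written is the standard proof that sits behind that citation, and its skeleton is sound: the layer-cake formula plus equimeasurability gives the $L^p$ identity; the two-parameter layer-cake representation, the bound $v_{\hn}(\{|u|>s\}\cap\{|v|>t\})\le\min(\mu(s),\nu(t))$, and the fact that the superlevel sets of $u^*$ and $v^*$ are nested concentric geodesic balls (so the minimum is attained) give Hardy--Littlewood; and the coarea formula, H\"older on level surfaces, and the isoperimetric property of geodesic balls in $\hn$ give P\'olya--Szeg\H{o}. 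Two small points of care: first, in general one only has $-\mu'(t)\ge\int_{\{|u|=t\}}|\nabla_{\hn}u|^{-1}\,d\sigma$ for a.e.\ $t$ (the critical set of $u$ and a possible singular part of $\mu$ spoil equality), but this inequality goes in the direction your chain needs, and you correctly flag these regularity issues and defer them to the reference; second, for $p=1$ your H\"older step degenerates, but the claim is then immediate from coarea and isoperimetry alone, so the statement's full range $p\ge1$ is still covered. In short, your route supplies an actual argument where the paper supplies a citation; the paper's choice buys brevity and avoids exactly the measure-theoretic bookkeeping you identify as the real work, while yours makes the geometric mechanism (isoperimetry on $\hn$) explicit.
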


Next we state a $p-$convexity lemma. The proof of the following lemma can be obtained as an application of Taylor's formula, we refer to \cite{gaz}  for further details.

\begin{lem}\label{convexity}
Let $p \geq 1$ and $\xi, \eta$ be real numbers such that $\xi \geq 0$ and $\xi - \eta \geq 0.$ Then

$$ (\xi - \eta)^{p} + p \xi^{p-1} \eta - \xi^{p} \geq
\left \{\begin{array}{ll}
\emph{max} \{ (p-1) \eta^2 \xi^{p-2}, |\eta|^{p} \},    & \text{if $p \geq 2$}\,,\\
  \frac{1}{2} p(p-1) \frac{\eta^2}{(\xi + |\eta|)^{2-p}}, & \text{if $1 \leq p \leq 2.$}
 \end{array}\right.$$
 \end{lem}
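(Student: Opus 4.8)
The plan is to recognize the left-hand side as the second-order Taylor remainder of the convex function $\phi(s)=s^p$ at the base point $s=\xi$ with increment $-\eta$, and then estimate that remainder in its integral form. Writing $\phi'(s)=ps^{p-1}$ and $\phi''(s)=p(p-1)s^{p-2}$, the integral form of Taylor's theorem gives, for $\xi\ge 0$ and $\eta\le\xi$,
\[
(\xi-\eta)^p+p\xi^{p-1}\eta-\xi^p=\phi(\xi-\eta)-\phi(\xi)+\phi'(\xi)\eta=p(p-1)\,\eta^2\int_0^1(1-s)\,(\xi-s\eta)^{p-2}\,\mathrm{d}s .
\]
This identity is legitimate because $\xi-s\eta\ge0$ for every $s\in[0,1]$ (if $\eta\ge0$ then $\xi-s\eta\ge\xi-\eta\ge0$, while if $\eta<0$ then $\xi-s\eta=\xi+s|\eta|\ge0$), and the integral converges for $p>1$; the degenerate cases $p=1$ and $\xi=0$ are checked directly and are trivial. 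Every estimate in the statement will then follow by bounding the integrand $(\xi-s\eta)^{p-2}$ suitably.

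For $1\le p\le 2$ the exponent $p-2$ is nonpositive, so $\tau\mapsto\tau^{p-2}$ is nonincreasing; since $\xi-s\eta\le\xi+|\eta|$ for all $s\in[0,1]$ regardless of the sign of $\eta$, I would bound $(\xi-s\eta)^{p-2}\ge(\xi+|\eta|)^{p-2}$ and use $\int_0^1(1-s)\,\mathrm{d}s=\tfrac12$, which yields the claimed lower bound $\tfrac12 p(p-1)\eta^2(\xi+|\eta|)^{p-2}$ in a single step.

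For $p\ge2$ the bound $|\eta|^p$ likewise follows from the integral by inserting a pointwise lower bound on $\xi-s\eta$ adapted to the sign of $\eta$: for $\eta\le0$ one uses $\xi-s\eta=\xi+s|\eta|\ge s|\eta|$, and for $\eta>0$ one uses $\xi-s\eta\ge(1-s)\eta$ (valid precisely because $\eta\le\xi$). Since $p-2\ge0$, each substitution reduces the integral to an elementary Beta-type integral, producing the constant $|\eta|^p$ in the first case and $(p-1)\eta^p\ge|\eta|^p$ in the second. The bound $(p-1)\eta^2\xi^{p-2}$ is immediate from the integral when $\eta\le0$, since then $\xi-s\eta\ge\xi$ forces $(\xi-s\eta)^{p-2}\ge\xi^{p-2}$ and $\tfrac p2\ge1$.

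The genuinely delicate case, and the step I expect to be the main obstacle, is this last bound $(p-1)\eta^2\xi^{p-2}$ for $0<\eta\le\xi$: here $\xi-s\eta\le\xi$ makes the integrand push the wrong way, so the crude integral estimate fails. I would instead argue by one-variable calculus, studying $D(\eta):=(\xi-\eta)^p+p\xi^{p-1}\eta-\xi^p-(p-1)\eta^2\xi^{p-2}$ on $[0,\xi]$. One computes $D(0)=D(\xi)=0$ and $D'(0)=0$, while $D''(\eta)=(p-1)\bigl(p(\xi-\eta)^{p-2}-2\xi^{p-2}\bigr)$ changes sign exactly once, passing from positive to negative as $\eta$ increases from $0$ to $\xi$. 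Hence $D$ is convex on an initial subinterval and concave thereafter; together with $D(0)=D'(0)=0$ (giving $D\ge0$ and $D$ nondecreasing on the convex part) and $D(\xi)=0$ (so that on the concave part $D$ lies above the chord joining its nonnegative endpoint values), this forces $D\ge0$ throughout, which completes the proof.
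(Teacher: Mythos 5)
Your proof is correct. A point worth making explicit: the paper does not actually prove this lemma --- it states only that the result ``can be obtained as an application of Taylor's formula'' and defers all details to \cite{gaz} --- so your argument is a self-contained realization of precisely the hint the paper gives, rather than a different route. Checking your steps: the integral-remainder identity
$(\xi-\eta)^p+p\xi^{p-1}\eta-\xi^p=p(p-1)\eta^2\int_0^1(1-s)(\xi-s\eta)^{p-2}\,\mathrm{d}s$
is legitimate under the hypotheses (for $1<p<2$ the possible vanishing of $\xi-s\eta$ at an endpoint gives an integrable singularity since $p-2>-1$); the case $1\le p\le2$ follows as you say from $\xi-s\eta\le\xi+|\eta|$; for $p\ge2$ the bound $|\eta|^p$ comes out exactly, via $\int_0^1(1-s)s^{p-2}\,\mathrm{d}s=\tfrac{1}{p(p-1)}$ when $\eta\le0$ and via $\int_0^1(1-s)^{p-1}\,\mathrm{d}s=\tfrac1p$ (giving $(p-1)\eta^p\ge\eta^p$) when $0<\eta\le\xi$; and for $\eta\le0$ the bound $(p-1)\eta^2\xi^{p-2}$ follows since $\tfrac p2\ge1$. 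You also correctly isolated the one genuinely delicate sub-case, $(p-1)\eta^2\xi^{p-2}$ with $0<\eta\le\xi$, where pointwise bounds on the integrand go the wrong way; your calculus argument there is sound: $D(0)=D'(0)=0$, $D(\xi)=0$, and $D''(\eta)=(p-1)\bigl(p(\xi-\eta)^{p-2}-2\xi^{p-2}\bigr)$ decreases through a single sign change at some $\eta_0\in(0,\xi)$ when $p>2$ (while $D\equiv0$ when $p=2$), so $D$ is nondecreasing, hence nonnegative, on $[0,\eta_0]$, and on $[\eta_0,\xi]$ concavity keeps $D$ above the chord joining $D(\eta_0)\ge0$ to $D(\xi)=0$. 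In short, your proposal supplies complete details for a statement the paper only cites, and it does so correctly.
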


Now we turn to prove an \emph{optimal} inequality which is one of the key ingredient in proving Theorem~\ref{mainphardy}.
\begin{lem}\label{lemhardy}
For all  $v \in W^{1,p}(0, \infty)$ and $1< l \leq p,$ there holds

\begin{equation}\label{hardytype}
\int_{0}^{\infty} |v(r)|^{p-l} (\coth r)^{p - l} |v^{\prime}(r)|^l \ {\rm d}r \geq \left( \frac{p-1}{p} \right)^l \int_{0}^{\infty} \frac{|v(r)|^{p}}{r^p} \ {\rm d}r.
\end{equation}
Furthermore, the constant $\left( \frac{p-1}{p} \right)^{l}$ in \eqref{hardytype} is sharp.
%
\end{lem}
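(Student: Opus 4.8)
The plan is to prove $\int_{0}^{\infty} |v|^{p-l} (\coth r)^{p-l} |v'|^{l} \, {\rm d}r \geq (\frac{p-1}{p})^{l} \int_{0}^{\infty} \frac{|v|^{p}}{r^{p}} \, {\rm d}r$ by reducing the weighted integral on the left to a genuine one-dimensional Hardy-type inequality, exploiting the elementary bound $\coth r \geq 1/r$ that holds for all $r>0$. First I would observe that since $\coth r > \frac{1}{r}$ on $(0,\infty)$ and $p - l \geq 0$ (because $l \leq p$), one has $(\coth r)^{p-l} \geq r^{-(p-l)}$ pointwise. Hence the left-hand side is bounded below by $\int_{0}^{\infty} |v|^{p-l} r^{-(p-l)} |v'|^{l} \, {\rm d}r$, and it suffices to prove the cleaner inequality
\begin{equation}\label{eq:reduced}
\int_{0}^{\infty} \frac{|v(r)|^{p-l}}{r^{p-l}} |v'(r)|^{l} \ {\rm d}r \geq \left(\frac{p-1}{p}\right)^{l} \int_{0}^{\infty} \frac{|v(r)|^{p}}{r^{p}} \ {\rm d}r.
\end{equation}

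To establish \eqref{eq:reduced} I would integrate by parts on the right-hand side after rewriting $r^{-p} = \frac{-1}{p-1} \frac{{\rm d}}{{\rm d}r} r^{-(p-1)}$, which produces the classical Hardy mechanism: integrating $\int_0^\infty |v|^p \frac{{\rm d}}{{\rm d}r}(r^{-(p-1)}) \, {\rm d}r$ by parts (the boundary terms vanishing for $v \in W^{1,p}$ compactly supported away from the bad endpoints, or by a density argument) transfers a derivative onto $|v|^p$, giving $\frac{p}{p-1} \int_0^\infty |v|^{p-1} |v'| \, r^{-(p-1)} \, {\rm d}r$. Thus $\int_0^\infty \frac{|v|^p}{r^p}\,{\rm d}r \leq \frac{p}{p-1} \int_0^\infty \frac{|v|^{p-1}|v'|}{r^{p-1}}\,{\rm d}r$. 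The remaining task is to bound this mixed integral by the left-hand side of \eqref{eq:reduced} via Hölder's inequality with exponents $\frac{p}{p-l}$ and $\frac{p}{l}$, splitting the integrand as $\big(\frac{|v|^p}{r^p}\big)^{(p-l)/p} \cdot \big(\frac{|v|^{p-l}|v'|^l}{r^{p-l}}\big)^{l/p}$; this isolates a factor of $\big(\int \frac{|v|^p}{r^p}\big)^{(p-l)/p}$ which can be absorbed into the left side, and raising everything to the power $p/l$ yields precisely the constant $(\frac{p-1}{p})^l$.

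For \emph{sharpness} of the constant, I would exhibit a family of near-optimizers. The extremal profile for classical Hardy is a power $v(r) \sim r^{(p-1)/p}$, so I would test \eqref{hardytype} (and the reduced \eqref{eq:reduced}) with truncated and regularized versions of $r^{(p-1)/p}$, for instance $v_\varepsilon(r) = r^{(p-1)/p + \varepsilon}$ cut off smoothly near $0$ and $\infty$, and compute that the ratio of the two sides tends to $(\frac{p-1}{p})^l$ as $\varepsilon \to 0$; because the minimizing functions concentrate the relevant mass near $r=0$ where $\coth r \sim 1/r$, the $\coth$ weight and the $1/r$ weight agree to leading order and the sharp constant is unaffected by the passage from \eqref{eq:reduced} back to \eqref{hardytype}.

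The main obstacle I anticipate is not the Hölder step but the justification of the integration by parts and the vanishing of boundary terms for a general $v \in W^{1,p}(0,\infty)$: near $r=0$ the weight $r^{-(p-1)}$ is singular, so one must argue that admissible $v$ decay fast enough (or reduce to a dense class of functions supported in a compact subinterval of $(0,\infty)$, for which all integrals are manifestly finite) before extending by approximation. Care is also needed to confirm that the Hölder exponents $\frac{p}{p-l}, \frac{p}{l}$ are genuinely conjugate and that the absorption of the $(\int |v|^p/r^p)^{(p-l)/p}$ factor is legitimate, i.e. that this integral is finite so that division is allowed — again handled by first working on the dense subclass.
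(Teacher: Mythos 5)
Your strategy coincides with the paper's: reduce via $\coth r \ge 1/r$ (valid since $p-l\ge 0$), run the classical Hardy integration by parts to produce the mixed term $\int_0^\infty |v|^{p-1}|v'|\,r^{-(p-1)}\,{\rm d}r$, close with H\"older plus absorption, and prove sharpness with near-extremals concentrating at the origin where $\coth r\sim 1/r$. However, your H\"older step contains a genuine error: the splitting
\begin{equation*}
\left(\frac{|v|^p}{r^p}\right)^{\frac{p-l}{p}}\cdot\left(\frac{|v|^{p-l}|v'|^l}{r^{p-l}}\right)^{\frac{l}{p}}
\end{equation*}
does \emph{not} reconstruct the integrand $\frac{|v|^{p-1}|v'|}{r^{p-1}}$. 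Multiplying out, the exponent of $|v'|$ is $l^2/p$ and that of $|v|$ (and of $1/r$) is $(p^2-l^2)/p$, so the identity holds only in the special case $l^2=p$. Hence H\"older with the pair $\bigl(\frac{p}{p-l},\frac{p}{l}\bigr)$ does not bound the mixed term at all. The same defect propagates to your bookkeeping of the constant: even granting the splitting, absorbing $\bigl(\int |v|^p r^{-p}\bigr)^{(p-l)/p}$ leaves the power $l/p$ on the right, and raising to $p/l$ produces $\bigl(\frac{p-1}{p}\bigr)^{p/l}$, which equals the claimed $\bigl(\frac{p-1}{p}\bigr)^{l}$ again only when $l^2=p$.

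The fix is to choose the splitting powers so that the exponent of $|v'|$ is exactly $1$, namely
\begin{equation*}
\frac{|v|^{p-1}|v'|}{r^{p-1}}=\left(\frac{|v|^p}{r^p}\right)^{\frac{l-1}{l}}\cdot\left(\frac{|v|^{p-l}|v'|^l}{r^{p-l}}\right)^{\frac1l},
\end{equation*}
and to apply H\"older with the conjugate pair $\bigl(\frac{l}{l-1},\,l\bigr)$, which depends on $l$ alone and not on $p$; this is exactly what the paper does. Absorption then leaves $\bigl(\int|v|^p r^{-p}\,{\rm d}r\bigr)^{1/l}$ on the left, and raising to the power $l$ gives precisely the constant $\bigl(\frac{p}{p-1}\bigr)^{l}$. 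Your remaining points are sound but sketched: the integrability/density issue you flag is resolved in the paper by proving the inequality first for $v\in C_c^\infty(0,\infty)$ and then extending to $W^{1,p}(0,\infty)$, using Young's inequality together with the classical Hardy inequality to dominate the left-hand side by $c^{-1}\bigl(\int|v|^p+\int|v'|^p\bigr)$; and your sharpness argument (truncated powers $r^{(p-1)/p+\varepsilon}$, with the observation that the $\coth$ weight and $1/r$ agree to leading order where the mass concentrates) matches the paper's explicit construction $V_\varepsilon^\delta$.
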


\begin{proof} We first prove the claim for $v \in C_{c}^{\infty}(0, \infty)$. 
Write
\begin{align*}
\int_{0}^{\infty} \frac{|v(r)|^{p}}{r^{p}} \ {\rm d}r & = \frac{-1}{p-1} \int_{0}^{\infty} |v(r)|^{p} \frac{d}{dr}(r^{-(p-1)}) \ {\rm d}r \\
& = \left( \frac{p}{p-1} \right) \int_{0}^{\infty} \frac{|v(r)|^{p-2} v(r) v^{\prime}(r)}{r^{p-1}} \ {\rm d}r
 \\
 &
 \leq  \left( \frac{p}{p-1} \right) \int_{0}^{\infty}  \frac{|v(r)|^{p-1} |v^{\prime}(r)|}{r^{p-1}} \ {\rm d}r \\
 & =  \left( \frac{p}{p-1} \right) \int_{0}^{\infty} \frac{|v(r)|^{\frac{{p(l-1)}}{l}}}{r^{\frac{p(l-1)}{l}}} \frac{|v(r)|^{\frac{{p-l}}{l}}|v^{\prime}(r)|}{r^{\frac{p-l}{l}}} \ {\rm d}r     \\
 & \leq  \left( \frac{p}{p-1} \right) \left( \int_{0}^{\infty} \frac{|v(r)|^p}{r^p} \ {\rm d}r \right)^{\frac{l-1}{l}} \left( \frac{|v(r)|^{p-l} |v^{\prime}(r)|^l}{r^{p-l}} \ {\rm d}r \right)^{\frac{1}{l}}\,.
\end{align*}
Since $\coth r \geq \frac{1}{r}$ for all $r>0$, we conclude
\[
\int_{0}^{\infty} |v(r)|^{p-l} (\coth r)^{p - l} |v^{\prime}(r)|^l \ {\rm d}r \geq \left( \frac{p-1}{p} \right)^l \int_{0}^{\infty} \frac{|v(r)|^{p}}{r^p} \ {\rm d}r.
\]
Now, noticing that by using Young inequality and the classical Hardy inequality with exponent $p$, we have
$$\int_{0}^{\infty}|v(r)|^{p}{\rm d}r +\int_{0}^{\infty}|v^{\prime}(r)|^{p}{\rm d}r \ge
c \int_{0}^{\infty} |v(r)|^{p-l} (\coth r)^{p - l} |v^{\prime}(r)|^l \ {\rm d}r,$$
the claim follows by density argument.

Next we turn to the optimality issue. For $\varepsilon > 0$ and $\delta > 0,$ consider

 $$ V_{\varepsilon}^{\delta}(r) :=
\left \{\begin{array}{ll}
r^{\frac{p-1 + \delta}{p}}, \quad \quad \quad  0 < r < \varepsilon \\
\varepsilon^{\frac{p-1+\delta}{p}}, \quad \quad \quad \varepsilon \leq r < 1 \\
\varepsilon^{\frac{p-1+\delta}{p}}(2- r), \ 1 \leq r < 2\\
0, \quad \quad \quad \quad \quad \quad r \geq 2.
 \end{array}\right.$$
 Clearly, $ V_{\varepsilon}^{\delta}(r) \in W^{1, p}(0, \infty)$ for $\varepsilon >0, \delta > 0.$ Furthermore, we have

 \begin{equation*}\label{rhs}
\int_{0}^{\infty} \frac{|V_{\varepsilon}^{\delta}(r)|^{p}}{r^p} \ {\rm d}r \geq \int_{0}^{\varepsilon} \frac{r^{p -1 + \delta}}{r^p} \ {\rm d}r  = \int_{0}^{\varepsilon} r^{\delta - 1} \ {\rm d}r.
 \end{equation*}
On the other hand, using the fact $\sinh r \geq r,$ we obtain

\begin{equation*}
 \begin{aligned}\label{lhs}
 &\int_{0}^{\infty} |V_{\varepsilon}^{\delta}(r)|^{p-l} (\coth r)^{p - l}  |(V_{\varepsilon}^{\delta}(r))^{\prime}|^l \ {\rm d}r  =\\
  & \left( \frac{p-1+\delta}{p} \right)^{l} \int_{0}^{\varepsilon} r^{\frac{(p-1+ \delta)(p-l)}{p}} (\coth r)^{p-l} r^{\frac{(\delta - 1)l}{p}} \ {\rm d}r \\
   & + \varepsilon^{p-1+ \delta} \int_{1}^{2} (2-r)^{p - l} (\coth r)^{p - l} \ {\rm d}r  \\
    & = \left( \frac{p-1+\delta}{p} \right)^{l} \int_{0}^{\varepsilon} r^{p - 1 + \delta - l} (\coth r)^{p - l} \ {\rm d}r + c \varepsilon^{ p-1+ \delta}  \\
   & \leq \left( \frac{p-1+\delta}{p} \right)^{l} (\cosh\varepsilon)^{p -l} \int_{0}^{\varepsilon} \frac{r^{p-1+\delta-l}}{(\sinh r)^{p -l}}  \ {\rm d}r+ c \varepsilon^{p-1+\delta}  \\
   &   \leq \left( \frac{p-1+\delta}{p} \right)^{l} (\cosh\varepsilon)^{p -l} \int_{0}^{\varepsilon} r^{\delta - 1} \ {\rm d}r +  c \varepsilon^{p-1+\delta}.
 \end{aligned}
 \end{equation*}
 Hence,

 \begin{equation*}
Q:= \inf_{v \in W^{1, p}(0, \infty) \setminus \{ 0\}} \dfrac{\int_{0}^{\infty} |v(r)|^{p-l} (\coth r)^{p - l} |v^{\prime}(r)|^l \ {\rm d}r}{\int_{0}^{\infty} \frac{|v(r)|^{p}}{r^p} \ {\rm d}r}
 \leq \left( \frac{p-1+\delta}{p} \right)^{l} (\cosh \varepsilon)^{p - l} +  c \delta \varepsilon^{p-1}.
 \end{equation*}
First letting $\varepsilon \rightarrow 0$, and then with  $\delta \rightarrow 0, $ we conclude that
\[
 Q \leq \left( \frac{p-1}{p} \right)^{l} \,.
 \]
This proves the optimality and concludes the proof.

 \end{proof}

\noindent{\bf Proof of Theorem~\ref{mainphardy} and of Corollary \ref{upl}}
\medskip

 By hyperbolic symmetrization, i.e., in view of Lemma~\ref{polya-szego},  we may assume $u \in C_{c}^{\infty}(\hn)$ nonnegative,
radially symmetric and non increasing. Hence,  to prove \eqref{eqphardy}, it is enough to show the validity of the following inequality

 \begin{align}\label{onephardy}
   &\int_{0}^{\infty} |u^{\prime}(r)|^{p} (\sinh r)^{N-1} \ {\rm d}r  -  \left(\frac{N-1}{p} \right)^{p}  \int_{0}^{\infty}  (u(r))^{p} (\sinh r)^{N-1}  \ {\rm d}r  \notag \\
 &  \geq
    (p-1) \left( \frac{N-1}{p} \right)^{p-2} \left( \frac{p-1}{p} \right)^2 \int_{0}^{\infty} \frac{(u(r))^p}{r^{p}} (\sinh r)^{N-1} \ {\rm d}r\,.
 \end{align}
\\

Let us define a  suitable transformation which allows to put the Poincar\'e term into evidence:

\[
v(r) := (\sinh r)^{\frac{N-1}{p}} u(r)
\]
so that

\[
v^{\prime}(r) = (u^{\prime}(r)) (\sinh r)^{\frac{N-1}{p}} + \left( \frac{N-1}{p} (\sinh r)^{\frac{N-1}{p}} \coth r \right) u,
\]
hence $v\in W^{1,p}(0,\infty)$, and 
\[
(u^{\prime}(r)) (\sinh r)^{\frac{N-1}{p}} = v^{\prime}(r) -  \left( \frac{N-1}{p} (\sinh r)^{\frac{N-1}{p}} \coth r \right) u.
\]
At this point we apply the $p$-convexity Lemma~\ref{convexity}. By taking $$\xi = \left( \frac{N-1}{p} \right) (\sinh r)^{\frac{N-1}{p}} \coth r u > 0 \quad \text{ and } \quad \eta = v^{\prime}(r)$$ and using Lemma~\ref{convexity} for $p \geq 2$, we obtain

\begin{align*}
|u^{\prime}(r)|^{p} (\sinh r)^{N-1} & \geq   (p-1) \left( \frac{N-1}{p} \right)^{p-2} v^{p-2}(r) (\coth r)^{p-2} (v^{\prime}(r))^2 \\
&+ \left( \frac{N-1}{p} \right)^{p} (\sinh r)^{N-1} (\coth r)^{p} u^{p}(r) \\
& - p \left( \frac{N-1}{p} \right)^{p-1} (\sinh r)^{\frac{(N-1)(p-1)}{p}} (\coth r)^{p-1} u^{p-1}(r) v^{\prime}(r) \\
& = (p-1) \left( \frac{N-1}{p} \right)^{p-2} v^{p-2}(r) (\coth r)^{p-2} (v^{\prime}(r))^2 \\ &+ \left( \frac{N-1}{p} \right)^{p} (\sinh r)^{N-1} (\coth r)^{p} u^{p}(r) \\
& - p \left( \frac{N-1}{p} \right)^{p-1} (\coth r)^{p-1} v^{p-1}(r) v^{\prime}(r).
\end{align*}
Integrating both sides of above inequality and applying Lemma~\ref{lemhardy} with $l = 2$, we get

\begin{align*}
\int_{0}^{\infty} |u^{\prime}(r)|^{p} (\sinh r)^{N-1} \ {\rm d}r & \geq  (p-1) \left( \frac{N-1}{p} \right)^{p-2}   \int_{0}^{\infty} v^{p-2}(r) (\coth r)^{p-2} (v^{\prime}(r))^2 \ {\rm d}r \\
& + \left( \frac{N-1}{p} \right)^{p} \int_{0}^{\infty}  (\coth r)^{p} v^{p}(r) \ {\rm d}r\\
 & -  \left( \frac{N-1}{p} \right)^{p-1} \int_{0}^{\infty}  (\coth r)^{p-1} \frac{d}{d r}(v(r))^{p} \ {\rm d}r \\
& \geq  (p-1) \left( \frac{N-1}{p} \right)^{p-2}  \left( \frac{p-1}{p} \right)^{2} \int_{0}^{\infty} \frac{v^{p}(r)}{r^{p}} \ {\rm d}r \\ & + \left(\frac{N-1}{p} \right)^{p} \int_{0}^{\infty} F(r) (v(r))^{p} \ {\rm d}r,
\end{align*}
where $F(r) := (\coth r)^{p} - \frac{p(p-1)}{N-1} \frac{(\coth r)^{p}}{\cosh^2 r}$
and in the integration by parts we have used the definition of $v$ and the fact that $N>p$.
Then, \eqref{onephardy} follows by showing that $F(r) \geq 1$ for all $r>0$ or equivalently that
\[
\tilde F(r): = (N-1) \cosh^{p} r - (N-1) \sinh^{p} r -  p(p-1) \cosh^{p-2} r \geq 0,
\]
for all $r>0$. By rewriting

\[
\tilde F(r)=\cosh^{p-2}r(N-1-p(p-1))+(N-1) \sinh^{2} r(\cos^{p-2} r- \sinh^{p-2} r) \,,
\]

we immediately infer that $\tilde F(r)$ is nonnegative provided that $N \geq 1 + p(p-1),$ and also the condition is necessary.  This completes the proof of Theorem~\ref{mainphardy}. \qed
\medskip

\noindent \bf Proof of Corollary \ref{upl}\rm. It suffices to notice that, by H\"older inequality:
\[\begin{aligned}
&\int_{{\mathbb H}^N}|u|^{p}\ {\rm d}v_{\hn}=\int_{{\mathbb H}^N}\frac{|u|}r\,|u|^{p-1}r\ {\rm d}v_{\hn}\\
&\le\left(\int_{{\mathbb H}^N}\frac{|u|^p}{r^p}\ {\rm d}v_{\hn}\right)^{\frac 1p}\left(\int_{{\mathbb H}^N}|u|^pr^{p'}\ {\rm d}v_{\hn}\right)^{\frac1{p'}}.
\end{aligned}\]
The conclusion follows by using inequality \eqref{eqphardy}.\qed

\section{Proof of Theorem \ref{superphardy}}\label{figSection}

Before proving Theorem \ref{superphardy} we collect here below the main properties of the weight $H_p$. This will clarify also the meaning of inequality {\eqref{eqphardy2}, see also Figure 1.

\begin{lem}\label{asymptotic}
Let $ H_{p} : \mathbb{R^{+}} \rightarrow \mathbb{R}$ be defined as in the statement of Theorem \ref{superphardy} with $p >  2$ and  $N \geq 1 + p(p -1)$. Then, the following holds \\

\begin{itemize}

\item[(a)] For all $r > 0,$ $ H_{p}(r) > 0$, $ H_{p}(r) \sim \left(\frac{N-p}{N-1}\right)^{p-2} \,\frac{1}{r^{p-2}}$ as $r \rightarrow 0^+$, and  $ H_{p}(r) \rightarrow 1^{-}$ as $r \rightarrow \infty.$ \medskip

\item[(b)] There exists a unique $r_{p} \in (0, \infty)$ such that $ H_{p}(r) \geq 1 $ for $r\in(0, r_{p}]$ and $H_{p}(r) < 1$ for $r\in(r_{p}, \infty).$

\end{itemize}
\end{lem}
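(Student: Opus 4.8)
The plan is to write $H_p(r)=g(r)^{\,p-2}$, where $g(r):=\coth r-\frac{p-1}{N-1}\,\frac1r$, and to reduce every claim about $H_p$ to the corresponding claim about $g$, using that for $p>2$ the map $t\mapsto t^{p-2}$ is continuous, strictly increasing on $(0,\infty)$, and fixes the value $1$. Two consequences of the hypotheses will be used repeatedly: since $N-1\ge p(p-1)>p-1$ we have $\frac{p-1}{N-1}<1$, and since in addition $p>2$ we have $N\ge 1+p(p-1)>p$, so $N-p>0$. For positivity I would invoke the elementary bound $\coth r>1/r$ for $r>0$ together with $\frac{p-1}{N-1}<1$, which gives $g(r)>\coth r-1/r>0$; hence $H_p=g^{\,p-2}>0$ on all of $(0,\infty)$.

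For the asymptotics in (a), near the origin I would insert the Laurent expansion $\coth r=\frac1r+\frac r3+O(r^3)$, obtaining $g(r)=\frac1r\bigl(1-\frac{p-1}{N-1}\bigr)+O(r)=\frac{N-p}{N-1}\,\frac1r+O(r)$; raising to the power $p-2>0$ and using continuity of $t\mapsto t^{p-2}$ yields $H_p(r)\sim\bigl(\frac{N-p}{N-1}\bigr)^{p-2}r^{-(p-2)}$. At infinity, $\coth r\to1$ and $1/r\to0$ force $g(r)\to1$ and thus $H_p(r)\to1$; to get convergence strictly from below I note that $\coth r-1$ is exponentially small while $\frac{p-1}{N-1}\frac1r$ decays only polynomially, so $g(r)<1$ for all sufficiently large $r$, whence $H_p(r)\to1^{-}$.

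The main work, and the main obstacle, is part (b), precisely because $g$ is \emph{not} monotone, so one cannot simply read off a single crossing of the level $1$. To control its shape I would differentiate, $g'(r)=\frac{p-1}{N-1}\,\frac{1}{r^2}-\frac{1}{\sinh^2 r}$, and observe that $g'(r)=0$ is equivalent to $\bigl(\frac{\sinh r}{r}\bigr)^2=\frac{N-1}{p-1}$. Since $r\mapsto\frac{\sinh r}{r}$ is strictly increasing from $1$ to $+\infty$ and $\frac{N-1}{p-1}>1$, this equation has exactly one root $r_*$, with $g'<0$ on $(0,r_*)$ and $g'>0$ on $(r_*,\infty)$; consequently $g$ strictly decreases from $+\infty$ (by the $r\to0^+$ asymptotics) to its unique global minimum $g(r_*)$ and then strictly increases up to the limit $1$. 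On $(r_*,\infty)$ the strict increase toward the finite limit $1$ forces $g<1$ throughout, so in particular $g(r_*)<1$; on $(0,r_*)$ the function decreases continuously from $+\infty$ to a value below $1$, hence meets the level $1$ at a unique $r_p\in(0,r_*)$, with $g>1$ on $(0,r_p)$ and $g<1$ on $(r_p,\infty)$. Finally I would transport this through the strictly increasing map $t\mapsto t^{p-2}$ to conclude that $H_p(r)\ge1$ exactly on $(0,r_p]$ and $H_p(r)<1$ exactly on $(r_p,\infty)$, which is (b). The delicate point throughout is this non-monotonicity: the whole argument hinges on pinning down that $g$ has a single interior minimum and that its right branch stays strictly below the limiting value $1$.
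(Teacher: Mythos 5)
Your proof is correct, and its overall architecture coincides with the paper's: both reduce $H_p$ to the function $\tilde H_p(r)=\coth r-\frac{p-1}{N-1}\,\frac1r$ (your $g$), both obtain (a) from $\coth r>1/r$, $\coth r\sim 1/r$ as $r\to0^+$ and $\coth r\to1$ as $r\to\infty$, and both obtain (b) by showing that $\tilde H_p$ strictly decreases from $+\infty$ to a unique interior minimum and then strictly increases toward the limit $1$, which forces a single crossing of the level $1$ on the left branch. The one place where you genuinely diverge is the derivative analysis. The paper writes $\tilde H_p'(r)=\frac{(N-1)^{-1}}{r^2\sinh^2 r}\,h(r)$ with $h(r)=(p-1)\sinh^2 r-(N-1)r^2$ and deduces that $h$ changes sign exactly once from $h'''>0$ together with $h''(0)=-2(N-p)<0$ and $h'(0)=h(0)=0$; you instead recast $g'(r)=0$ as $\left(\frac{\sinh r}{r}\right)^2=\frac{N-1}{p-1}$ and invoke the strict monotonicity of $r\mapsto\frac{\sinh r}{r}$ from $1$ to $+\infty$. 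The two devices carry identical content, since $h(r)>0$ is precisely $\left(\frac{\sinh r}{r}\right)^2>\frac{N-1}{p-1}$; yours is shorter but uses the monotonicity of $\frac{\sinh r}{r}$ as a black box (easily justified, e.g., by its power series with positive coefficients), whereas the paper's successive-differentiation argument is self-contained. A small point in your favor: you spell out why the minimum value $g(r_*)$ must lie strictly below $1$ (strict increase toward the finite limit $1$ on the right branch), a step the paper leaves implicit when it asserts that (b) follows from the sign of $\tilde H_p'$ and assertion (a).
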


\begin{proof}
We set \[
\tilde H_{p}(r) : =   \coth r - \left(\frac{p-1}{N-1}\right)  \frac{1}{r} \,, \quad r>0\,.
\]
Then, the property of $H_p$ can be readily deduced from that of $\tilde H_{p}$.

The sign and the asymptotics of $\tilde H_{p}$ follows from fact that
$$\coth r > \frac{1}{r}\  \mbox{in} \  (0, \infty)\,, \quad \coth r \sim \frac{1}{r}\ \mbox{as} \  r \rightarrow 0^+\,,  \quad  \mbox{and} \  \coth r \rightarrow 1 \ \mbox{as} \  r \rightarrow \infty.$$

\medskip

To prove assertion (b), we note that

 \begin{equation}\label{hdef}
\tilde H'_{p}(r) =(N-1)^{-1}\left( \frac{-(N-1)r^2+(p-1)\sinh^2 r}{r^2\sinh^2 r} \right)=: \frac{(N-1)^{-1}}{r^2\sinh^2 r}\,h(r) \,.
 \end{equation}
Since $h'''(r)=8(p-1)\cosh r \sinh r>0$ for all $r>0$, $h''(0)=-2(N-p)$, and $h'(0)=h(0)=0$ one readily deduces the existence of a unique $r_{0}>0$ such that $h(r) < 0$ in $(0, r_{0})$, $h(r_0)=0$
and $h(r) > 0$ in $( r_{0}, \infty).$ Hence, $ \tilde H'_{p}(r) < 0$ in $(0, r_{0})$  and $\tilde H'_{p}(r) > 0$ in $( r_{0}, \infty).$ This fact and assertion (a)  gives the existence of a unique $r_{p}\in (0,r_0)$ for which (b) holds where $r_{p}$ clearly satisfies
 \begin{equation}\label{rp}
\coth r_{p} - 1 - \frac{p-1}{N-1} \frac{1}{r_{p}}=0.
 \end{equation}

\end{proof}

\begin{center}
\begin{figure}[ht]
 \includegraphics[width=0.7\textwidth]{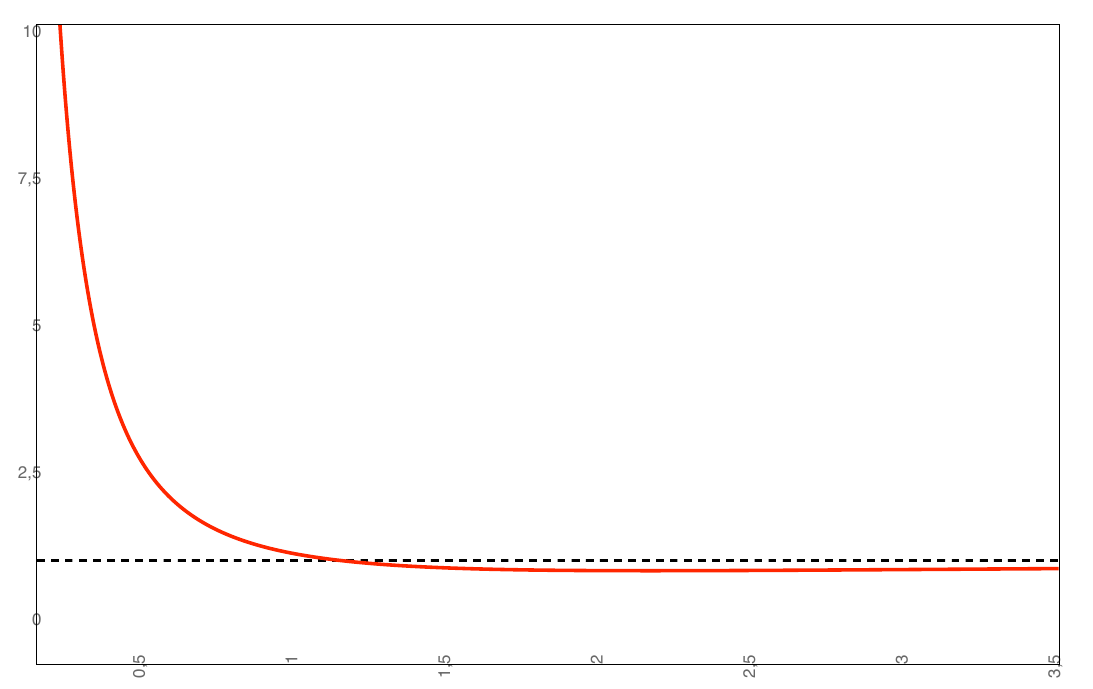}
\caption{The plot of $y=H_{p}(r) $ for $p=4$ and $N=13$. The dotted line is  $y=1$ and the intersection point of the two curves is the point $r_p$ as defined in Lemma \ref{asymptotic}-$(b)$.}
\end{figure}
\end{center}

\par
\bigskip\par

 \noindent {\bf{Proof of Theorem \ref{superphardy}}} \\
 The $p$-Laplacian operator in radial coordinates on the hyperbolic space writes

 \begin{equation}\begin{aligned}\label{radiallaplacian}
\Delta_{p, \hn} u(r)  :=  \Delta_{p}u(r) &= (p -1) | u^{\prime}(r)|^{p-2}  {u}^{\prime \prime}(r)  + (N-1) \coth r |u^{\prime}(r)|^{p-2} u^{\prime}(r)\\ &:=| u^{\prime}(r)|^{p-2}L_p u(r),
 \end{aligned}\end{equation}
where $L_{p} u(r) = (p-1) u^{\prime \prime}(r) + (N-1) \coth r u^{\prime}(r).$ \\
Set $g(r) = \left( \frac{r}{\sinh r} \right)^{\frac{(N-1)}{p}}$ and $f(r) = r^{\frac{p-N}{p}}$, some straightforward computations give

\begin{equation}
\begin{aligned}
 \label{comp1}
L_p g(r)& =  \frac{-(N-1)}{p} \left[ \frac{(N-1) - p(p-1)}{p} \frac{1}{\sinh^2 r} + \left(\frac{N-1}{p} \right) \right. \\
 & \left. + \frac{(p-1)(p - (N-1))}{p} \frac{1}{r^2} + \frac{(N-1)(p-2)}{p}  \frac{\coth r}{r}  \right] g(r) 
\end{aligned}
\end{equation}
 and

 \begin{align}\label{comp2}
 L_p f(r)= \left[ \frac{N(N-p)(p-1)}{p^2} \frac{1}{r^2} - (N-1)\coth r \frac{N - p}{p} \frac{1}{r} \right] f(r)
 \end{align}

Using \eqref{comp1} and \eqref{comp2},  we deduce for $\tilde g(r) = g(r) f(r),$

\begin{equation}\begin{aligned}\label{compute1}
L_p \tilde g (r)& = (L_pg(r)) f(r)+ ( L_p f(r) ) g(r) \\
 & + 2(p-1) \left( \frac{-(N-1)}{p} \coth r + \frac{N-1}{p} \frac{1}{r} \right) g(r) f^{\prime}(r) \\
 & = - \left[ \left( \frac{N-1}{p} \right)^{2} \tilde g + \frac{(p-1)^2}{p^2} \frac{1}{r^2} \tilde g + \frac{(p-1)(p-2)(N-1)}{p^2} \left( \frac{\coth r}{r} \right) \tilde g \right.  \\
 & \left. + \frac{(N-1)(N - 1 - p(p-1))}{p^2} \frac{1}{\sinh^2 r} \tilde g \right].
 \end{aligned}
\end{equation}

  In view of Eq. \eqref{radiallaplacian} and Eq. \eqref{compute1} we obtain

\begin{equation}\begin{aligned}\label{intermediate-step}
 -\Delta_{p} \tilde g - \left( \frac{N-1}{p} \right)^2 |\tilde g^{\prime}|^{p-2} \tilde g  =   \\
  \frac{(p-1)^2}{p^2} \frac{1}{r^2} |\tilde g^{\prime}|^{p-2} \tilde g +
 \frac{(p-1)(p-2)(N-1)}{p^2} \left( \frac{\coth r}{r} \right) |\tilde g^{\prime}|^{p-2} \tilde g   \\
  + \frac{(N-1)(N - 1 - p(p-1))}{p^2} \frac{1}{\sinh^2 r}  |\tilde g^{\prime}|^{p-2}\tilde g.
 \end{aligned}
\end{equation}

  Furthermore, we have
\begin{equation}\begin{aligned}\label{compute2}
\tilde g^{\prime}(r) & = (g^{\prime}(r))f(r) + (f^{\prime}(r))g(r)  \\
& = - \frac{1}{p} \left( (N-1) \coth r  - (p-1) \frac{1}{r} \right) \tilde g(r)\,.
 \end{aligned}
\end{equation}
Namely,
 \[
 |\tilde g^{\prime}(r)|^{p-2} = \left( \frac{N-1}{p}\right)^{p-2} H_{p}(r) {\tilde g}^{p-2}(r)\,,
 \]
with $H_{p}(r)$ as defined in the statement of Theorem \ref{hardyball}. On the other hand, a further computation using \eqref{compute2} and the fact $\coth r > \frac{1}{r},$ gives

\begin{equation}\begin{aligned}\label{compute3}
  |\tilde g^{\prime}(r)|^{p-2} & =  \frac{(p-1)^{p-2}}{p^{p-2} r^{p-2}} \left( \frac{N-1}{p-1} r \coth r  -  1 \right)^{p-2} \tilde g^{p-2}(r)  \\
  & \geq  \frac{(p-1)^{p-2}}{p^{p-2}}  \frac{\tilde g^{p-2}(r)}{r^{p-2}}.
 \end{aligned}
\end{equation}
 Substituting \eqref{compute3} in \eqref{intermediate-step} we conclude

\begin{equation*}\begin{aligned}
 -\Delta_{p} \tilde g - \left( \frac{N-1}{p} \right)^p H_{p}(r) {\tilde g}^{p-1}  \geq \frac{(p-1)^p}{p^p} \frac{1}{r^p}  {\tilde g}^{p-1}\\  +
 \frac{(p-1)^{p-1}(p-2)(N-1)}{p^p} \left( \frac{\coth r}{r} \right) \frac{1}{r^{p-2}} {\tilde g}^{p-1}  \\
  + \frac{(N-1)(N - 1 - p(p-1))}{p^2} \frac{1}{\sinh^2 r}  {\tilde g}^{p-1}  \\
  \geq \frac{(p-1)^{p-1} (N(p -2) + 1)}{p^p} \frac{1}{r^p}  {\tilde g}^{p-1} \\  + \frac{(N-1)(N - 1 - p(p-1))}{p^2} \frac{1}{\sinh^2 r}  {\tilde g}^{p-1}.
\end{aligned}
\end{equation*}
 This proves that  $\tilde g(r) = \left( \frac{r}{\sinh r} \right)^{\frac{N-1}{p}} r^{\frac{p-N}{p}} $ is a super-solution of the equation corresponding to \eqref{eqphardy2}. Hence, by Allegretto-Piepenbrink theorem for $p$-Laplacian setting, (for detail see  \cite[Theorem~2.3]{PT1})   \normalcolor
  inequality  \eqref{eqphardy2} follows immediately for functions in  $C_{c}^{\infty}(\hn\setminus\{ x_{0}\})$. To extend the inequality for functions belonging to
$C_{c}^{\infty}(\hn)$ one argues as in the proof of Proposition \ref{weight1}. Namely,
since $N>p$, the set $\{ x_{0}\}$ is compact and has  zero $p$-capacity, therefore
the completion of $C_{c}^{\infty}(\hn)$ and $C_{c}^{\infty}(\hn\setminus\{ x_{0}\})$ with respect to the norm$\left(\int_{\hn} |\nabla_{\hn} u|^p\,{\rm d}v_{\hn}\right)^{1/p}$
coincides (see \cite[Proposition A.1]{Dambrosio}). This concludes the proof. 
  \medskip

As a consequence of  Theorem \ref{superphardy} we have the following

 \begin{thm}\label{hardyball}
 Let $p \geq  2$ and  $N \geq 1 + p(p -1).$ Let $\Lambda_{p}$ be as in \eqref{LAMBDAp} and $r:=\varrho(x,x_0)$ with $x_0\in{\mathbb H}^N$ fixed. Then for $u \in C_{c}^{\infty}(B(x_{0}, r_{p}))$ there holds

\begin{equation}\begin{aligned}\label{eqphardy1}
\int_{B(x_{0}, r_{p})}&|\nabla_{\hn} u|^p\,{\rm d}v_{\hn}- \Lambda_p \int_{B(x_{0}, r_{p})}  |u|^{p} \ {\rm d}v_{\hn}\\  &\geq
 \frac{(p-1)^{p-1} (N(p -2) + 1)}{p^p}  \int_{B(x_{0}, r_{p})} \frac{|u|^p}{r^p} \ {\rm d}v_{\hn}   \\
 &+ \frac{(N-1)(N-1 -p(p-1))(p - 1)^{p-2}}{p^{p}} \int_{B(x_{0}, r_{p})} \frac{|u|^{p}}{\sinh^{p} r} \ {\rm d}v_{\hn}
\end{aligned}
\end{equation}
where $B(x_{0}, r_{p})$ is the geodesic ball of radius $r_{p}$ centered at $x_0$ and where we let, for $p>2$, $r_p=r_p(N)$ be the unique positive solution to the equation
 \[
\coth r_{p} - 1 - \frac{p-1}{N-1} \frac{1}{r_{p}}=0,
\]
whereas $r_2:=+\infty$ (namely $B(x_{0}, r_{2})=\hn$).

In particular, for every $p>2$ the map $N\mapsto r_p(N)$ is strictly increasing in $[1 + p(p -1),+\infty)$ and $ \lim_{N\rightarrow +\infty} r_p(N)=+\infty$ while, for every $N> 3$ the map $p\mapsto r_p$ is strictly decreasing in $(2,\frac{1+\sqrt{4N-3}}{2}]$.
 \end{thm}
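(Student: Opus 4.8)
The plan is to obtain \eqref{eqphardy1} as an immediate localization of Theorem \ref{superphardy}, using the sign of $H_p$ supplied by Lemma \ref{asymptotic}. Given $u \in C_{c}^{\infty}(B(x_{0}, r_{p}))$, I would extend it by zero to a function in $C_{c}^{\infty}(\hn)$, so that \eqref{eqphardy2} applies. Since $\operatorname{supp} u \subset B(x_{0}, r_{p})$, every integral over $\hn$ in \eqref{eqphardy2} reduces to the corresponding integral over the ball; and by Lemma \ref{asymptotic}(b) one has $H_{p}(r) \geq 1$ throughout $(0, r_{p}]$, whence
\[
\Lambda_p \int_{B(x_{0},r_{p})} H_{p}(r)\,|u|^p\,{\rm d}v_{\hn} \geq \Lambda_p \int_{B(x_{0},r_{p})} |u|^p\,{\rm d}v_{\hn}.
\]
Replacing the left-hand term of \eqref{eqphardy2} with this lower bound yields exactly \eqref{eqphardy1}. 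For $p=2$ one has $H_{2}\equiv 1$, so no localization is needed and the inequality already holds on all of $\hn$, in agreement with the convention $r_{2}:=+\infty$.

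For the monotonicity assertions I would invoke the implicit function theorem applied to
\[
G(r,p,N) := \coth r - 1 - \frac{p-1}{N-1}\,\frac 1r,
\]
whose unique positive zero is, by Lemma \ref{asymptotic}, precisely $r_{p}$. The crucial observation is that $\partial_r G(\cdot,p,N)$ coincides with the derivative of the function $\tilde H_{p}$ analyzed in the proof of Lemma \ref{asymptotic}; since that lemma locates $r_{p}$ strictly inside the interval $(0,r_{0})$ on which $\tilde H_{p}'<0$, we have $\partial_r G(r_{p},p,N)<0$ at every admissible pair $(p,N)$. As $\partial_N G = \frac{p-1}{(N-1)^2}\frac 1r>0$, the formula $\frac{dr_{p}}{dN} = -\,\partial_N G/\partial_r G>0$ gives strict monotonicity in $N$; moreover $r_{p}$ being increasing and satisfying $\coth r_{p}-1=\frac{p-1}{N-1}\frac{1}{r_{p}}\to 0$ forces $r_{p}\to\infty$, since a finite limit $L$ would give the contradiction $\coth L-1=0$. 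Likewise $\partial_p G=-\frac{1}{N-1}\frac 1r<0$, so $\frac{dr_{p}}{dp}=-\,\partial_p G/\partial_r G<0$, yielding strict monotonicity in $p$. The interval in the statement reflects exactly the standing hypothesis $N\geq 1+p(p-1)$: solving $p^2-p-(N-1)\leq 0$ gives $p\leq \frac{1+\sqrt{4N-3}}{2}$, and Lemma \ref{asymptotic} guarantees $r_{p}$ up to and including this endpoint, so the monotonicity persists on $(2,\frac{1+\sqrt{4N-3}}{2}]$.

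The argument is essentially routine once Theorem \ref{superphardy} and Lemma \ref{asymptotic} are available: no new functional inequality is required, only a localization of an already-established global inequality together with two elementary implicit-differentiation computations. The single point demanding a little care is the strict sign $\partial_r G(r_{p},\cdot,\cdot)<0$, which is what makes the implicit function theorem applicable and the monotonicity strict; but this sign is delivered verbatim by the monotonicity analysis of $\tilde H_{p}$ carried out in Lemma \ref{asymptotic}(b), so I anticipate no genuine obstacle.
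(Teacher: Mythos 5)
Your proposal is correct and follows essentially the same route as the paper: the inequality is obtained by combining Theorem \ref{superphardy} with the bound $H_p\geq 1$ on $(0,r_p]$ from Lemma \ref{asymptotic}(b), and the monotonicity claims come from implicit differentiation of \eqref{rp}, with the needed sign $\partial_r G(r_p)<0$ being exactly the sign of $\tilde H_p'(r_p)$ (equivalently $h(r_p)<0$) established in the proof of Lemma \ref{asymptotic}(b). In fact your write-up is slightly more complete than the paper's, since you also justify $\lim_{N\to+\infty}r_p(N)=+\infty$ and the form of the $p$-interval, which the paper states but does not argue.
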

 \begin{proof}

\noindent 
The proof readily follows by combining the statements of Theorem \ref{superphardy}  and Lemma \ref{asymptotic}. In particular equation \eqref{rp} implicitly defines a map $N\mapsto r_p(N)$.
 By differentiating in \eqref{rp} one gets
$$\frac{d}{dN} (r_p(N))=-\frac{(p-1)r_p \sinh^2r_p}{(N-1)h(r_p)}\,,$$
where the function $h$ is as defined in \eqref{hdef}. Since from the proof of Lemma \ref{asymptotic}-(b) we know that $h(r_p)<0$, we conclude that the map $N\mapsto r_p(N)$ is strictly increasing. On the other hand, equation \eqref{rp} also implicitly defines a map $p\mapsto r_p$.   In this case we get
$$\frac{d}{dp} (r_p)=\frac{r_p \sinh^2r_p}{(N-1)h(r_p)}<0\,.$$
Hence, the map $p\mapsto r_p(N)$ is strictly decreasing.
 \end{proof}

\par\bigskip\noindent
\textbf{Acknowledgments.} The first author is partially supported by the Research Project FIR (Futuro in Ricerca) no. RBFR13WJ6X
``Geometrical and qualitative aspects of PDE's'' (Italy). The second author is partially supported by
 the PRIN project no. 201274FYK7\textunderscore004 ``Aspetti Variazionali e Perturbativi nei Problemi Differenziali non Lineari''.
The third author is partially supported at the Technion by a fellowship of the Israel Council for Higher Education. The fourth author is partially supported by the PRIN project no. 2015HY8JCC ``Partial
Differential Equations and Related Analytic-Geometric Inequalities'' (Italy).
The first, second and fourth authors are members of the Gruppo Nazionale per l'Analisi Matematica, la Probabilit\`a e le loro Applicazioni (GNAMPA) of the Istituto Nazionale di Alta Matematica (INdAM).

\end{document}